\newtheorem{proof}{Proof}[section]
\newtheorem{corollary}{Corollary}[proof]
\newtheorem{proposition}{Proposition}[section]
\title{Augmented resolution of linear hyperbolic systems under
  nonconservative form}
\author[1,*]{Adri\'an Navas-Montilla}
\author[2]{Ilhan \"Ozgen-Xian}
\affil[1]{\small Centro Universitario de la Defensa, Universidad de Zaragoza, Spain}
\affil[2]{\small EESA, Lawrence Berkeley National Laboratory, CA, USA}
\affil[*]{\small Corresponding author: anavas@unizar.es}
\date{\small This is a preprint that has not gone through peer
  review.\\ Updated: \today}
\begin{document}

\maketitle

\begin{abstract}
Hyperbolic systems under nonconservative form arise in numerous
applications modeling physical processes, for example from the
relaxation of more general equations (e.g. with dissipative
terms). This paper reviews an existing class of augmented Roe schemes
and discusses their application to linear nonconservative hyperbolic
systems with source terms. We extend existing augmented methods by
redefining them within a common framework which uses a geometric
reinterpretation of source terms. This results in intrinsically
well-balanced numerical discretizations. We discuss two equivalent
formulations: (1) a nonconservative approach and (2) a conservative
reformulation of the problem. The equilibrium properties of the
schemes are examined and the conditions for the preservation of the
well-balanced property are provided. Transient and steady state test
cases for linear acoustics and hyperbolic heat equations are
presented. A complete set of benchmark problems with analytical
solution, including transient and steady situations with
discontinuities in the medium properties, are presented and used to
assess the equilibrium properties of the schemes. It is shown that the
proposed schemes satisfy the expected equilibrium and convergence
properties.
\end{abstract}

Key words: Augmented Roe solver, nonconservative system, moving
equilibria, well-balanced scheme

\tableofcontents

\section{Introduction}

In this work, our aim is to give a {\it detailed} overview of using
augmented solvers for linear nonconservative hyperbolic systems, with
an emphasis on preserving exact (moving) equilibria. The preservation
of such equilibria is an issue of paramount importance since many
events of interest are just perturbations of an equilibrium
state. However, especially in the presence of source terms, preserving
the correct equilibrium is numerically challenging. Schemes that
recover these solutions exactly are referred to as {\it well-balanced}
schemes \cite{Bermudez_1994}. General frameworks to design
well-balanced numerical schemes for nonlinear nonconservative systems
have been developed in, for example, \cite{Pares_2004,
  LeVeque_2011}. These frameworks are based on the so-called {\it
  path-conservative} methods. In contrast, we will design our
framework based on the augmented solver methods
\cite{Murillo_2016}. These augmented solver-based schemes have been
successfully applied to various hyperbolic conservation and balance
laws, see, for example, \cite{MURILLO20117202}, but have not yet been
discussed in the context of nonconservative systems.

In the following, we will consider linear systems of partial
differential equations in the form of
\begin{equation}\label{eq:systemnDim}
  \frac{\partial U}{\partial t} + \mathcal{A}\left( x \right)
  \frac{\partial U}{\partial x} = S,
\end{equation}
where $U = U \left(x, t\right) \in \Xi \subset \mathbb{R}^{n}$ is the
vector of conserved quantities that takes values on $\Xi$, the set of
admissible states of $U$, $\mathcal{A} = \mathcal{A} \in \mathbb{R}^{n
  \times n}$ is a variable coefficient matrix, and $S$ is the vector
of sources, which will be considered to be of the form $S = S
\left(U\right)$. $n$ denotes the number of equations and unknowns. The
domain of definition for the problem in Eq. (\ref{eq:systemnDim}) is
given by $\Psi= \Omega \times \left[0,T\right]$, hence $x\in \Omega
\subset \mathbb{R}$.

The system in Eq. (\ref{eq:systemnDim}) is said to be {\it
  hyperbolic}, if $\forall x \in \Omega \subset \mathbb{R}$ and
$\forall U \in \Xi$, the matrix $\mathcal{A}$ is diagonalizable with
$n$ real eigenvalues. If the eigenvalues are distinct, then the system
is said to be strictly hyperbolic \cite{Godlewski_1996}.  Further, the
system in Eq. (\ref{eq:systemnDim}) is considered {\it
  nonconservative}, if the matrix $\mathcal{A} \in \mathbb{R}^{n
  \times n}$ is not assumed to be a Jacobian matrix of a conservative
flux \cite{Dumbser_2011b}.  Hyperbolic partial differential equations
often arise from the relaxation of more general problems, such as
those involving dissipative terms. In these cases, the issue of having
infinite wave speeds associated with the diffusive terms is resolved.

The ability of a numerical scheme to preserve the correct equilibrium
is closely related to the {\it equilibrium condition} across a
discontinuity. This condition can be derived by integrating
Eq. (\ref{eq:systemnDim}) inside a sufficiently large spatial
domain. Consider the discontinuous solution of
Eq. (\ref{eq:systemnDim}), located at $x=x_d$ at the initial time
$t=0$, defined as
\begin{equation}\label{eq:weaksoldisc00}
  U\left(x, t \right)=
  \begin{cases}
       U_L & \text{if } x < x_d +\sigma t, \\
       U_R & \text{if } x > x_d +\sigma t,
  \end{cases}
\end{equation}
where the subscripts $L$ and $R$ denote the left and right hand-side
of the discontinuity, and $\sigma$ is the wave celerity. Using an
integration volume $\left[x_L, x_R\right] \in \mathbb{R}$ such that
$x_L<x_d<x_R$, the integration of Eq. (\ref{eq:systemnDim}) inside
this volume yields
\begin{equation}\label{eq:Cond1_0}
  \int_{x_L}^{x_R} \mathcal{A}\left(x\right) \frac{\partial U}
      {\partial x} dx - \sigma \left(U_R - U_L\right) =
      \int_{x_L}^{x_R} S \left(U \right) dx.
\end{equation}
Eq. (\ref{eq:Cond1_0}) is only satisfied if the source term is
considered to be a singular source defined at $x_d$ at the initial
time that travels with the wave---its position in time is given by $
x_d +\sigma t$) \cite{LeVeque_2002}. In what follows, source terms
will be considered to be defined at fixed locations---in other words,
not traveling with the wave. Thus, for traveling waves,
Eq. (\ref{eq:Cond1_0}) reduces to
\begin{equation}\label{eq:Cond1_1}
  \int_{x_L}^{x_R} \mathcal{A}\left(x\right) \frac{\partial U}
      {\partial x} dx - \sigma \left(U_R - U_L\right)  = 0.
\end{equation}
For steady waves with $\sigma=0$, Eq. (\ref{eq:Cond1_0}) reduces to
\begin{equation}\label{eq:Cond1}
  \int_{x_L}^{x_R} \mathcal{A}\left(x\right) \frac{\partial U}
      {\partial x} dx = \int_{x_L}^{x_R} S \left(U \right) dx.
\end{equation}
This separation leads to the generalized Ran\-kine-Hu\-go\-niot
relation (GRH) in the steady state, which are a keystone in the
construction of augmented solvers.

The methods herein used focus on the resolution of non-conservative 
systems with {\it geometric source terms}, defined as a function of 
the system variables and the gradient of a scalar field. In contrast, 
{\it nongeometric source terms} do not include such a gradient in 
their definition. 

In what follows, we will redefine augmented schemes to solve arbitrary
linear hyperbolic systems under nonconservative form. For
well-balancing purposes, we use a {\it geometric reinterpretation of
  source terms}, which is a generalization of the idea introduced in
\cite{bouchut_sommer_zeitlin_2004}. In this method, source terms are
considered in the Riemann problem as singular sources in the sense of
distributions. Our generalization of this method allows to obtain
well-balanced properties even in cases with nongeometrical source
terms. In combination with augmented Riemann solvers, this source term
treatment enables exact equilibrium solutions for linear hyperbolic
systems.

We present two equivalent forms of this numerical scheme: (1) the
fluctuation form and (2) the numerical flux form. The fluctuation form
of our approach can be directly applied to the nonconservative
system. The numerical flux form requires rewriting the system by
augmenting with additional equations accounting for the variation in
time of the system matrix coefficients and additional source terms. In
this way, a conservative flux can be defined and the scheme can be
written in numerical flux form.

We apply these schemes to the linear acoustic equations and to the
hyperbolized heat equation with source term. The latter involves a
stiff relaxation source term which must be exactly balanced in order
to provide a conservative estimation of the heat flux. A complete set
of steady and transient test cases are presented together with their
analytical solution. To emphasize the importance of well-balancing,
the solution provided by the well-balanced schemes is compared with
the solution provided by a wave propagation algorithm that uses a
centered explicit---that is to say not well-balanced---integration of
source terms.

\section{Numerical resolution of nonconservative systems\label{sec:numres-nc}}

Consider the Cauchy problem defined in the domain $\Omega \times
[0,T]$ for the partial differential equation in
Eq. (\ref{eq:systemnDim}) with
\begin{equation}\label{eq:ivbp}
  \left\{
  \begin{aligned}
    &\text{IC: } U\left(x, 0\right) = \accentset{\circ}{U} \left(x\right), \;
    \forall x \in \Omega, \\
    &\text{BC: } U\left(x, t\right) = U_{\partial \Omega} \left(x, t\right) \;
    \forall x \in \partial \Omega,
  \end{aligned}
  \right.
\end{equation}
where IC denotes the initial condition specified by a suitable
function $\accentset{\circ}{U}\left(x\right): \Omega \rightarrow \Xi$,
and BC denotes the boundary condition specified by a suitable function
$U_{\partial \Omega}\left(x, t\right): \Omega \times [0,T] \rightarrow
\Xi$. The spatial domain is given by $\Omega=\left[a, b\right]$, with
$a$ and $b$ being the start and end points of the domain,
respectively. It is discretized in $N$ volume cells, defined as
$\Omega_{i} \subseteq \Omega$, such that $ \Omega \approx
\Omega_{\Delta x} =\bigcup_{i=1}^{N}\Omega_{i}$, with cell edges at
\begin{equation}
  a=x_{\frac{1}{2}}<x_{\frac{3}{2}}<...<x_{N-\frac{1}{2}}<
  x_{N+\frac{1}{2}}=b,
\end{equation}
and with cell size $\Delta x$. In the following, we assume that the
discretized domain $\Omega_{\Delta x}$ is identical to
$\Omega$. Inside each cell, at time $t^n$, the conserved quantities
are defined as cell averages as
\begin{equation}\label{eq:cellaverage1} 
U_i^n=\frac{1}{\Delta x} \int_{x_{i-\frac{1}{2}}}^{x_{i+\frac{1}{2}}} U\left(x,
t^n\right)dx, \quad i=1, \dots, N.
\end{equation}
Integration of the system in Eq. (\ref{eq:ivbp}) inside the control
volume $\Omega_i \times \left[t^n,t^{n+1}\right]$ yields Godunov's
scheme \cite{Godunov_1959}
\begin{equation}\label{eq:system1Dint26}
  U_i^{n+1} = U_i^n - \frac{\Delta t}{\Delta x} \left(
  D_{i+\frac{1}{2}}^- + D_{i-\frac{1}{2}}^+ \right)^n + {\Delta t}
  \bar{S}_{i}^n,
\end{equation}
where $D_{i+\frac{1}{2}}^-$ and $D_{i-\frac{1}{2}}^+$ are the
nonconservative fluctuations, and $\bar{S}_i$ is the numerical
approximation of the integral of the source term as
\begin{equation}\label{eq:centeredSource}
  \bar{S}_{i} \approx \int_{x_{i-\frac{1}{2}}}^{x_{i+\frac{1}{2}}} S\left(U\right) dx.
\end{equation}
$D_{i+\frac{1}{2}}^-$ and $D_{i-\frac{1}{2}}^+$ are computed by
solving the Riemann problem (RP) for Eq. (\ref{eq:systemnDim}) across
the interfaces $i+\frac{1}{2}$ and $i-\frac{1}{2}$ of the cell, respectively. For
example, at the interface $i+\frac{1}{2}$, the RP is
\begin{equation}\label{eq:rpOriginal_f}
  \begin{aligned}
    \accentset{\circ}{U}\left(x\right) =
    \begin{cases}
      U_i    & \text{if } x < x_{i+\frac{1}{2}},\\
      U_{i+1} & \text{if } x > x_{i+\frac{1}{2}}.
    \end{cases}
  \end{aligned}
\end{equation}

Let us study the conditions that such a numerical scheme must satisfy
to preserve equilibrium states. Consider the FV discretization of cell
values in Eq. (\ref{eq:cellaverage1}). In order to preserve an initial
steady state, the equilibrium condition in Eq. (\ref{eq:Cond1}) must
be satisfied at each cell interface. Across a discontinuity at an
interface $i+\frac{1}{2}$, the integration of
Eq. (\ref{eq:systemnDim}) on $\left[ x_i, x_{i+1} \right]$ gives
\begin{equation}\label{eq:cond_discrete}
  \int_{x_i}^{x_{i+1}} \mathcal{A} \left( x \right) \frac{\partial
    U}{\partial x} dx = \int_{x_i}^{x_{i+1}} S \left( U \right) dx.
\end{equation}
The integral of the nonconservative products on the left hand side of Eq. (\ref{eq:cond_discrete}) 
can be approached as
\begin{equation}\label{eq:eqCond120}
\int_{x_i}^{x_{i+1}} \mathcal{A} \left( x \right) \frac{\partial
    U}{\partial x} dx = \left(\widetilde{\mathcal{A}} \delta U \right)_{i+\frac{1}{2}} ,
\end{equation}
where
\begin{equation}\label{eq:eqCond12def}
  \widetilde{\mathcal{A}}_{i+\frac{1}{2}} = \frac{1}{2}\left(\mathcal{A}_i +
  \mathcal{A}_{i+1} \right),
\end{equation}
and $\delta U_{i+\frac{1}{2}} = U_{i+1} - U_{i}$. This allows to
rewrite Eq. (\ref{eq:cond_discrete}) as
\begin{equation}\label{eq:eqCond12}
\left(\widetilde{\mathcal{A}} \delta U \right)_{i+\frac{1}{2}} =
\int_{x_{i}}^{x_{i+1}} S\left(U\right) dx.
\end{equation}

In order to preserve the equilibrium state, suitable numerical
approximations to the integral of the source term that depend on both
the form of the fluctuations and the source term must be derived to
satisfy Eq.  (\ref{eq:eqCond12}). We see that the traditional finite
volume (FV) updating scheme in Eq. (\ref{eq:system1Dint26}) in
combination with Eq. (\ref{eq:centeredSource}) does not intrinsically
preserve equilibria, because we have not specified how the integration
is carried out. Hence, the integral of the source term is not
guaranteed to satisfy Eq. (\ref{eq:eqCond12}) at cell interfaces.

Finding suitable approximations that satisfy Eq. (\ref{eq:eqCond12})
is a subtle issue, commonly studied in depth-averaged geophysical flow
modeling, see for example \cite{Audusse_2004, Bermudez_1994,
  MURILLO20117202, Xia_2018}. In this context, a geometric
reinterpretation of the source term $S\left(U\right)$ yields a
well-balanced numerical scheme with the nature of the source term
\cite{bouchut_sommer_zeitlin_2004}.

\subsection{Geometric reinterpretation of source terms}\label{geometricreintS}

The nonconservative products in Eq. (\ref{eq:eqCond12}) may be exactly
balanced with the source terms using a geometric reinterpretation of
$S\left(U\right)$ as
\begin{equation}\label{eq:Cond12def2}
  S\left(U\right) = \frac{\partial}{\partial x} V\left(x, S\right).
\end{equation}
Here, $V$ is the primitive variable of $S\left(U\right)$ that
satisfies
\begin{equation}\label{eq:geomCont}
  V\left(x, S\right) = \int_{-\infty}^{x} S\left(U\right) dx.
\end{equation}
If the piecewise constant FV discretization of data, given
in Eq. (\ref{eq:cellaverage1}), is also applied to the source term,
$V$ can be evaluated at the cell interfaces as
\begin{equation}\label{eq:geomDisc}
  \left\{
  \begin{aligned}
    &V_{i+\frac{1}{2}} = V\left(0\right) + \sum_{\iota = 1}^i S_\iota \Delta
    x,\\
    &S_i = \frac{1}{\Delta x} \int_{x_{i-\frac{1}{2}}}^{x_{i+\frac{1}{2}}}
    S\left(U\right) dx,
  \end{aligned}
  \right.
\end{equation}
where we set $V\left(0\right) = 0$ for the sake of simplicity. If
considering a linear approximation, cell averages of the primitive
variable $V$ can computed as
\begin{equation}\label{eq:geomDiscAvg}
  V_i = \frac{1}{2} \left(V_{i+\frac{1}{2}} + V_{i-\frac{1}{2}}\right),
\end{equation}
which can also be written as
\begin{equation}\label{eq:geomDiscAvg2}
  V_{i} = V_{i-\frac{1}{2}} + \frac{1}{2} S_i \Delta x.
\end{equation}
Using Eq. (\ref{eq:geomDiscAvg2}), we define
\begin{equation}
    \delta V_{i+\frac{1}{2}} = V_{i+1} - V_i = V_{i+\frac{1}{2}} + \frac{1}{2} S_{i+1}
    \Delta x - V_{i-\frac{1}{2}} - \frac{1}{2} S_i \Delta x.
\end{equation}
 The integral of the source term in Eq. (\ref{eq:eqCond12}), hereafter
referred to as integral of the source term at the interface, can be
approached by using Eq. (\ref{eq:Cond12def2}) as
\begin{equation}\label{eq:intSourceInt}
  \left\{
  \begin{aligned}
    \bar{S}_{i+\frac{1}{2}} &= \int_{x_i}^{x_{i+1}} S\left(U\right) dx,\\
    &= \int_{x_i}^{x_{i+1}} \frac{\partial}{\partial x} V\left(x, S\right) dx,\\
    &\approx \delta V_{i+\frac{1}{2}} \equiv \frac{1}{2} \left(S_i + S_{i+1}\right) \Delta x,
  \end{aligned}
  \right.
\end{equation}
provided a linear approximation. Note that the degree of the
approximation for $V_{i}$ will have an impact on the overall order of
the numerical scheme (e.g. the linear approximation in
Eq. (\ref{eq:geomDiscAvg}) yields a second order of
accuracy). Moreover, when dealing with nonlinear systems and source
terms, a more sophisticated averaging for Eq. (\ref{eq:geomDiscAvg})
might be required.

The steady equilibrium condition that relates the discontinuous left
and right states at the interface $i + \frac{1}{2}$ given by
Eq. (\ref{eq:cond_discrete}) can be obtained by combining
Eq. (\ref{eq:eqCond12}) with Eq. (\ref{eq:intSourceInt}) as
\begin{equation}\label{eq:condbalance}
  \left(\widetilde{\mathcal{A}} \delta U\right)_{i+\frac{1}{2}} =
    \delta V_{i+\frac{1}{2}},
\end{equation}
which states that the jump across cell interfaces under steady state
is
\begin{equation}\label{eq:condbalance2}
  \delta U_{i+\frac{1}{2}} = \frac{1}{2} \widetilde{\mathcal{A}}^{-1}
  \left(S_{i+1} + S_i \right) \Delta x.
\end{equation}

\begin{proposition}\label{prop:exact-schemes}
For a numerical scheme that satisfies the approximate G\-R\-H relation
in Eq.  (\ref{eq:cond_discrete}) at cell interfaces and considers the
linear approximation of the nonconservative product and source term in
Eq.  (\ref{eq:eqCond12}) and Eq. (\ref{eq:intSourceInt}),
respectively, the numerical solution of Eq. (\ref{eq:systemnDim}) will
be equal to the analytical solution (considering machine accuracy) iff
for the analytical solution of Eq.  (\ref{eq:systemnDim}),
$\mathcal{A}\left(U\right) \frac{\partial U}{\partial x}$ can be
expressed as a polynomial of degree at most 1
($\mathcal{A}\left(x\right) \frac{\partial U}{\partial x} \in
P_1(\Omega)$), with $P_1$ the degree 1-polynomial basis and
$x\in\Omega$, and at the same time $S$ can also be expressed as a
polynomial of degree at most 1 ($S \in P_1(\Omega)$).
\end{proposition}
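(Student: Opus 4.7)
The plan rests on recognising that both discrete approximations entering the augmented scheme---the trapezoidal-type average Eq.~(\ref{eq:eqCond120}) for the nonconservative product and the analogous average Eq.~(\ref{eq:intSourceInt}) for the source integral---are two-point Newton--Cotes quadratures whose classical error vanishes if and only if the integrand is affine on the integration interval. The proof therefore reduces to matching this exactness property of the trapezoidal rule against the two polynomial hypotheses in the statement.

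For sufficiency I would assume $\mathcal{A}(x)\partial_x U\in P_1(\Omega)$ and $S\in P_1(\Omega)$. On every cell interval $[x_i,x_{i+1}]$ the trapezoidal rule is then exact, so the ``$\approx$'' in Eq.~(\ref{eq:eqCond120}) and the ``$\approx$'' in Eq.~(\ref{eq:intSourceInt}) become genuine equalities:
\begin{equation*}
  (\widetilde{\mathcal{A}}\delta U)_{i+\frac{1}{2}} = \int_{x_i}^{x_{i+1}}\mathcal{A}(x)\frac{\partial U}{\partial x}\,dx, \qquad \delta V_{i+\frac{1}{2}} = \int_{x_i}^{x_{i+1}} S(U)\,dx.
\end{equation*}
The discrete GRH condition Eq.~(\ref{eq:condbalance}) enforced by the scheme then coincides with the analytical relation Eq.~(\ref{eq:cond_discrete}) at every interface, and the finite-volume update Eq.~(\ref{eq:system1Dint26}) reproduces the analytical cell averages up to machine precision, since no quadrature error is introduced at any interface or time step.

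For necessity I would proceed by contrapositive. If $\mathcal{A}(x)\partial_x U\notin P_1(\Omega)$ or $S\notin P_1(\Omega)$, then by the converse of the trapezoidal exactness criterion (a function whose trapezoidal rule is exact on every subinterval of $\Omega$ is forced to be affine, as one sees by differentiating $\tfrac{1}{2}(f(a)+f(b))(b-a)=\int_a^b f\,dx$ twice in the endpoints) the corresponding quadrature carries a nonzero error on at least one cell, so Eq.~(\ref{eq:condbalance}) ceases to be equivalent to Eq.~(\ref{eq:cond_discrete}) and the numerical interface jump deviates from the analytical one. The main obstacle is ruling out an accidental cancellation between the two quadrature errors---a coincidence that could leave the scheme spuriously exact even when the polynomial hypotheses fail. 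Here I would lean on the geometric reinterpretation of Section~\ref{geometricreintS}, which builds $\delta V$ solely from discrete samples of $S$ and $\widetilde{\mathcal{A}}\delta U$ solely from discrete samples of $\mathcal{A}$ and $U$: this complete decoupling of the two approximations forces each trapezoidal error to vanish on its own, so that each integrand must lie in $P_1(\Omega)$ separately.
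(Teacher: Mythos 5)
Your proof takes essentially the same route as the paper's: both rest on the fact that the two-point linear (trapezoidal) quadratures in Eq.~(\ref{eq:eqCond12}) and Eq.~(\ref{eq:intSourceInt}) are exact precisely for integrands of degree at most one, so the discrete GRH relation coincides with Eq.~(\ref{eq:cond_discrete}) in the affine case and fails otherwise. Your added remarks---the explicit converse argument for trapezoidal exactness and the observation that the decoupling of the two quadratures rules out error cancellation---go slightly beyond the paper's one-paragraph argument but do not change the approach.
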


\begin{proof}
  Since the linear approximation of the nonconservative products and
  source term in Eq. (\ref{eq:eqCond12}) and
  Eq. (\ref{eq:intSourceInt}), respectively, provide the exact
  integral if those terms are polynomials of degree at most one, the
  exact GRH relation in Eq. (\ref{eq:cond_discrete}) is recovered at
  every cell interface and the exact solution is thus obtained. Since
  the linear approximation of the nonconservative products and source
  term can not ensure an exact integral if those terms are polynomials
  of degree higher than one, the exact GRH relation in
  Eq. (\ref{eq:cond_discrete}) can not be recovered, and the numerical
  solution deviates from the exact
  solution. Prop. \ref{prop:exact-schemes} is thus proved.
\end{proof}

\begin{corollary}\label{cor:exact-source}
  Let us consider a numerical scheme for the resolution of
  Eq. (\ref{eq:systemnDim}) that satisfies the approximate GRH
  relation in Eq. (\ref{eq:cond_discrete}) at cell interfaces and
  considers the linear approximation of the nonconservative products
  and source term in Eq. (\ref{eq:eqCond12}) and Eq.
  (\ref{eq:intSourceInt}), respectively. Let $q$ be a variable of the
  system in Eq. (\ref{eq:cond_discrete}), that represents a physical
  conservative flux. Further, let $q$ satisfy $\partial q / \partial t
  = 0$ and $\partial q / \partial x=\phi$ at the steady state, where
  $\phi$ (e.g., a source of the quantity associated to $q$), is a
  polynomial of degree at most 1 ( $\phi \in P_1(\Omega)$). Then,
  the numerical scheme will preserve the exact conservation of $q$ at
  the discrete level under steady state. The conditions $\partial q /
  \partial t = 0$ and $\partial q / \partial x = \phi$ will be
  fulfilled with machine accuracy.
\end{corollary}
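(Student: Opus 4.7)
The plan is to apply the logic of Proposition~\ref{prop:exact-schemes} componentwise to the $q$-equation of the system and show that the discrete update for $q$ vanishes identically when $\phi \in P_1(\Omega)$. The argument is essentially a specialization of the machine-accuracy statement of the proposition, restricted to one row of the vector system in which the steady-state combination of nonconservative product and source collapses to a linear function of $x$.

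First, I would isolate the $q$-row of the system in Eq.~(\ref{eq:systemnDim}) and, using $\partial q/\partial t = 0$ and $\partial q/\partial x = \phi$ at steady state, write the exact continuous balance on $[x_i,x_{i+1}]$ as
\begin{equation*}
q_{i+1} - q_i \;=\; \int_{x_i}^{x_{i+1}} \phi(x)\, dx.
\end{equation*}
This is the relation the numerical scheme must reproduce at machine precision in order to preserve $q$.

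Next, I would compute the $q$-component of the cell update in Eq.~(\ref{eq:system1Dint26}). Since the scheme satisfies the approximate GRH at each interface with the linear approximations in Eqs.~(\ref{eq:eqCond12}) and~(\ref{eq:intSourceInt}), the balance Eq.~(\ref{eq:condbalance}) projected on the $q$-row reduces precisely to the trapezoidal-like quadrature
\begin{equation*}
q_{i+1} - q_i \;\approx\; \tfrac{1}{2}\bigl(\phi_i + \phi_{i+1}\bigr)\Delta x.
\end{equation*}
Because $\phi \in P_1(\Omega)$, the trapezoidal rule is exact, so the right-hand side equals $\int_{x_i}^{x_{i+1}} \phi\, dx$ to machine accuracy, and the discrete interface balance coincides with the continuous one. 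Substituting these interface balances back into the $q$-slot of Eq.~(\ref{eq:system1Dint26}) makes the nonconservative fluctuations $D^-_{i+1/2}$ and $D^+_{i-1/2}$ cancel the source contribution $\Delta x\,\bar{S}_i$ exactly, yielding $q_i^{n+1} = q_i^n$; simultaneously, the resulting discrete slope $(q_{i+1}-q_i)/\Delta x$ agrees with the linear interpolant of $\phi$, which is exact.

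The main obstacle is not analytical but one of careful bookkeeping: Proposition~\ref{prop:exact-schemes} requires $\mathcal{A}\partial U/\partial x$ and $S$ to individually lie in $P_1(\Omega)$, whereas here only the \emph{combination} $\phi$ encoded in the $q$-row is in $P_1$. The key observation to close the argument is that the $q$-component of the update depends only on the \emph{difference} between the linear approximations of the nonconservative product and of the source integral; by the assumption $\partial q/\partial x = \phi \in P_1$, this difference is integrated exactly by the trapezoidal rule even when the two individual terms are of higher polynomial degree. This is what upgrades the componentwise conservation statement from approximate to machine accurate and proves the corollary.
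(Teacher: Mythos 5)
The paper does not actually supply a proof of Cor.~\ref{cor:exact-source}; it is stated as an immediate consequence of Prop.~\ref{prop:exact-schemes}, and your argument is, in substance, the row-wise specialization of that proposition which the authors leave implicit. Your core steps are the right ones: at the discrete steady state the GRH relation $(\widetilde{\mathcal{A}}\,\delta U)_{i+\frac{1}{2}}=\delta V_{i+\frac{1}{2}}$ holds at every interface, its row containing $\partial q/\partial x$ reduces to $q_{i+1}-q_i=\tfrac{1}{2}(\phi_i+\phi_{i+1})\Delta x$, and the trapezoidal rule is exact for $\phi\in P_1(\Omega)$, so this coincides with the exact balance $q_{i+1}-q_i=\int_{x_i}^{x_{i+1}}\phi\,dx$ to machine precision. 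Your closing observation --- that only the single row carrying $\partial q/\partial x$ needs to satisfy the $P_1$ hypothesis, not the full vectors $\mathcal{A}\,\partial U/\partial x$ and $S$ of Prop.~\ref{prop:exact-schemes} --- is precisely what makes the corollary stronger than a trivial restriction of the proposition, and it is confirmed by the paper's smoothly-varying-conductivity test, where $q$ is machine-exact while $u$ is not. One bookkeeping slip worth fixing: the relation $q_i^{n+1}=q_i^n$ is \emph{not} obtained by cancelling $\phi$ against fluctuations in the ``$q$-slot'' of Eq.~(\ref{eq:system1Dint26}); the component of the update that advances $q_i$ in time involves the \emph{other} row of the system (for the heat equation, $\tfrac{k}{\varepsilon}\partial u/\partial x$ and the relaxation source), whereas $\partial q/\partial x=\phi$ lives in the row that advances the companion variable. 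The correct reading is that $\partial q/\partial t=0$ is what ``under steady state'' means for the converged discrete solution, and the exactness claim concerns the spatial balance $q_{i+1}-q_i=\int\phi$ extracted from the GRH row where $q$ is differentiated. With that attribution corrected, your proof is sound and matches the intended argument.
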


We will now discuss how to combine this source term treatment with
augmented solvers to obtain a well-balanced numerical scheme.

\subsection{Augmented resolution of the nonhomogeneous system\label{sec:aug-res}}

The augmented Riemann solver approach is based on the consideration of
the source terms directly in the definition of the Riemann problem as
a singular source term. This way, the contribution of the source term
is not explicitly included in the updating scheme for the problem in
Eq. (\ref{eq:ivbp}), but in the definition of the fluctuations,
yielding
\begin{equation}\label{eq:updatingschemeAugmented}
  U_i^{n+1} = U_i^n - \frac{\Delta t}{\Delta x} \left(
  D_{i+\frac{1}{2}}^- + D_{i-\frac{1}{2}}^+ \right)^n.
\end{equation}
The fluctuations must satisfy 
\begin{equation}\label{eq:eqCond1b}
  D_{i+\frac{1}{2}}^- + D_{i+\frac{1}{2}}^+ = \int_{x_i}^{x_{i+1}}
  \mathcal{A}\left(x\right) \frac{\partial U}{\partial x} dx -
  \int_{x_i}^{x_{i+1}} S\left(U\right) dx
\end{equation}
for a suitable approximation of the integrals of the nonconservative
product and the source term. Since we directly average the coefficient matrix, 
and use the geometric reinterpretation of the source term in Eq. 
(\ref{eq:intSourceInt}), Eq. (\ref{eq:eqCond1b}) becomes
\begin{equation}\label{eq:eqCond1c}
  D_{i+\frac{1}{2}}^- + D_{i+\frac{1}{2}}^+ = \left(
  \widetilde{\mathcal{A}} \delta U - \delta V \right)_{i+\frac{1}{2}},
\end{equation}
which recovers the equilibrium condition in Eq. (\ref{eq:condbalance})
at the steady state. In the following, the fluctuations,
$D_{i+\frac{1}{2}}^-$ and $D_{i-\frac{1}{2}}^+$, are derived following
two equivalent approaches. The first approach is based on the
resolution of the nonconservative system, allowing to satisfy
Eq. (\ref{eq:eqCond1c}) by decomposing the source term in the system
matrix eigenvector basis. It is referred to as the {\it fluctuation
  form} of the augmented scheme. The second approach is based on
rewriting the system in conservative form by introducing the
definition of a conservative flux. In this way, the updating scheme
can be written in numerical flux form and Eq. (\ref{eq:eqCond1c}) will
be satisfied by decomposing the source term using the eigenvector
basis of the Jacobian of the conservative flux. Consequently, we refer
to this approach as the {\it flux form} of the augmented scheme.

\subsubsection{Augmented scheme in fluctuation form}\label{sectionfluc}

We consider the RP in Eq. (\ref{eq:rpOriginal_f}) for the hyperbolic
system in Eq. (\ref{eq:systemnDim}), and approximate it by a constant
coefficient linear RP as
\begin{equation}\label{eq:initialsystemforRP3_f}
  \left\{
  \begin{aligned}
    &\text{PDE: } \frac{\partial \hat{U}}{\partial t} +
    \widetilde{\mathcal{A}}_{i+\frac{1}{2}}
    \frac{\partial \hat{U}}{\partial x} = \bar{S}_{i+\frac{1}{2}}\delta_{x=x_{i+1/2}}, \\
    &\text{IC: } \accentset{\circ}{\hat{U}} \left( x \right) =
    \begin{cases}
      U_i    & \text{if } x < x_{i+\frac{1}{2}},\\
      U_{i+1} & \text{if } x > x_{i+\frac{1}{2}},
    \end{cases}
  \end{aligned}
  \right.
\end{equation}
where $\widetilde{\mathcal{A}}_{i+\frac{1}{2}}$ is computed using the 
linear averaging in Eq. (\ref{eq:eqCond12def}),
$\hat{U}\left(x, t\right)$ is the approximate solution of the RP in
Eq. (\ref{eq:rpOriginal_f}) for Eq. (\ref{eq:systemnDim}), and the
singular source term is defined as
\begin{equation}\label{eq:sourcedirac}
   \bar{S}_{i+\frac{1}{2}}\delta_{x=x_{i+1/2}} =
  \begin{cases}
    \delta V _{i+\frac{1}{2}} & \text{if } x = x_{i+\frac{1}{2}}, \\
    0 & \text{else},
  \end{cases}
\end{equation}
allowing to recover the GRH condition in Eq.  (\ref{eq:condbalance})
at the interface. Note that $\delta_{x=x_{i+1/2}}$ is the Dirac mass placed at $x_{i+\frac{1}{2}}$. Since in our case linear averaging of $\mathcal{A}$
yields a Roe-type matrix \cite{Roe_1981},
$\widetilde{\mathcal{A}}_{i+\frac{1}{2}}$ is diagonalizable with
${N_\lambda}=n$ real eigenvalues
$\widetilde{\lambda}^m_{i+\frac{1}{2}}$ and right eigenvectors
$\widetilde{e}^m$ that approximate the eigenvalues and eigenvectors of
$\mathcal{A}$ from the original RP in
Eq. (\ref{eq:rpOriginal_f}). Using
$\widetilde{\lambda}^m_{i+\frac{1}{2}}$ and $\widetilde{e}^m$, we can
construct the matrix $\widetilde{\mathcal{P}}_{i+\frac{1}{2}} = \left[
  \widetilde{e}^1, \widetilde{e}^2, \dots, \widetilde{e}^{N_\lambda}
  \right]$ and its inverse
$\widetilde{\mathcal{P}}^{-1}_{i+\frac{1}{2}}$. Then,
$\widetilde{\mathcal{A}}_{i+\frac{1}{2}}$ can be diagonalized as
\begin{equation}
  \widetilde{\mathcal{A}}_{i+\frac{1}{2}} = \left(
  \widetilde{\mathcal{P}} \widetilde{\Lambda} \widetilde{\mathcal{P}}
  \right)_{i+\frac{1}{2}},
\end{equation}
where $\widetilde{\Lambda}_{i+\frac{1}{2}} =
\text{diag}\left(\widetilde{\lambda}^1, \widetilde{\lambda}^2, \dots,
\widetilde{\lambda}^{N_\lambda}\right)$ is a diagonal matrix with the
approximate eigenvalues as entries.

The system in Eq. (\ref{eq:initialsystemforRP3_f}) can now be
decoupled as
\begin{equation}\label{eq:RPcharacteristicvariables_f}
  \left\{
  \begin{aligned}
    &\text{PDE: } \frac{\partial \hat{W}}{\partial t} +
    \widetilde{\Lambda}_{i+\frac{1}{2}} \frac{\partial
      \hat{W}}{\partial x} = \bar{B}_{i+\frac{1}{2}}\delta_{x=x_{i+1/2}},\\
    &\text{IC: } \accentset{\circ}{\hat{W}} \left(x\right) =
    \begin{cases}
      W_i = \widetilde{\mathcal{P}}^{-1}_{i+\frac{1}{2}} U_i &
      \text{if } x < x_{i+\frac{1}{2}},\\
      W_{i+1} = \widetilde{\mathcal{P}}^{-1}_{i+\frac{1}{2}} U_{i+1} &
      \text{if } x > x_{i+\frac{1}{2}},
    \end{cases}
  \end{aligned}
  \right.
  \end{equation}
 where $\hat{W}= \widetilde{\mathcal{P}}^{-1}_{i+\frac{1}{2}} \hat{U}$
 are the characteristic variables, $\hat{W} =
 \left[\hat{w}^1,\hat{w}^2, \dots, \hat{w}^{N_\lambda}\right]$, and
 $\bar{B}_{i+\frac{1}{2}} =
 \widetilde{\mathcal{P}}^{-1}_{i+\frac{1}{2}} \bar{S}_{i+\frac{1}{2}}$
 is the projection of the source term onto the eigenbasis of the
 coefficient matrix $\widehat{\mathcal{A}}$.

 The general solution $\hat{U}\left(x, t\right)$ can be derived by
 expanding the solution as a linear combination of the eigenvectors as
\begin{equation}\label{lcomb1001_f} 
  \hat{U}\left(x, t\right) = \sum_{m=1}^{N_\lambda} \hat{w}^m \left(x,
  t \right) \widetilde{e}^{m}_{i+\frac{1}{2}},
\end{equation} 
where the scalar values $\hat{w}^m (x,t)$ are the characteristic
solutions and represent the strength of each $m$-wave.

Let $U^-_{i+\frac{1}{2}}$ and $U^+_{i+\frac{1}{2}}$ be the left and
right states in the vicinity of $x_{i+\frac{1}{2}}$, respectively,
defined as
\begin{equation}\label{eq:leftrightstates2_f}
  \left\{
  \begin{aligned}
    U^-_{i+\frac{1}{2}} = \lim_{x \rightarrow x_{i+\frac{1}{2}}^-} \hat{U} \left(x, t\right),\\
    U^+_{i+\frac{1}{2}} = \lim_{x \rightarrow x_{i+\frac{1}{2}}^+} \hat{U} \left(x, t\right),
  \end{aligned}
  \right.
\end{equation}
They can be computed as shown in \cite{Murillo_2016} as
\begin{equation}  \label{eq:lcomb_exppp_f}
  \left\{
  \begin{aligned}
    &U^{-}_{i+\frac{1}{2}}= U_i + \sum_{\lambda^m<0} \left[ \left(
      \alpha - \frac{\bar{\beta}}{\widetilde{\lambda}}\right)
      \widetilde{e}\right]^{m}_{i+{\frac{1}{2}}},
    \\&U^{+}_{i+\frac{1}{2}}= U_{i+1} - \sum_{\lambda^m>0} \left[
      \left( \alpha - \frac{\bar{\beta}}{\widetilde{\lambda}}\right)
      \widetilde{e}\right]^{m}_{i+{\frac{1}{2}}},
  \end{aligned}
  \right.
\end{equation} 
where the set of wave and source strengths, $\vec{\alpha}$ and
$\vec{\beta}$, respectively, are defined as
\begin{equation}
  \left\{
  \begin{aligned}
    &\vec{\alpha}_{i+\frac{1}{2}}= \left[ \alpha ^1, \alpha^2,
      \dots, \alpha^{N_\lambda} \right]^T_{i+{\frac{1}{2}}} =
    \left(\widetilde{\mathcal{P}}^{-1} \delta U\right)_{i +
      \frac{1}{2}},\\
    &\vec{\beta}_{i+\frac{1}{2}}=\left[\beta^1, \beta^2, \dots,
    \beta^{N_\lambda} \right]_{i+\frac{1}{2}}^T =
    \left(\widetilde{\mathcal{P}}^{-1} \delta V
    \right)_{i+\frac{1}{2}}.
  \end{aligned}
  \right.
\end{equation} 
The fluctuations are calculated as
\begin{equation}\label{eq:flucts}
\left\{
\begin{aligned}
  D_{i+\frac{1}{2}}^-= \sum_{\lambda^m<0} \left[
    \left(\widetilde{\lambda} \alpha - \beta\right)
    \widetilde{e}\right]^{m}_{i+\frac{1}{2}},
  \\ D_{i+\frac{1}{2}}^+= \sum_{\lambda^m>0} \left[ \left(
    \widetilde{\lambda}\alpha - \beta\right)
    \widetilde{e}\right]^{m}_{i+\frac{1}{2}}.
\end{aligned}
\right.
\end{equation} 
This satisfies
\begin{equation}  \label{eq:relationnsource1} 
  \delta V_{i+\frac{1}{2}} = \widetilde{\mathcal{A}} _{i+\frac{1}{2}}
  \left( U^+_{i+\frac{1}{2}} - U^-_{i+\frac{1}{2}} \right),
\end{equation} 
which under steady state yields 
\begin{equation}  \label{eq:relationnsource2} 
  \delta V_{i+\frac{1}{2}} = \widetilde{\mathcal{A}} _{i+\frac{1}{2}}
  \delta U_{i+\frac{1}{2}} ,
\end{equation} 
allowing to recover Eq. (\ref{eq:condbalance}).

\subsubsection{Augmented scheme in  flux form}\label{sectionflux}

An equivalent to the numerical scheme in Sec. \ref{sectionfluc} can be
derived by augmenting the system to include the elements of
$\mathcal{A}$ in Eq. (\ref{eq:systemnDim}) as conserved quantities,
hereafter denoted by $a_{i,j}$. This is accomplished by adding the
trivial equation
\begin{equation}\label{eq:extraEq}
  \frac{\partial a_{i,j}}{\partial t} = 0
\end{equation}
to the system of equations.

This gives augmented vectors of the size $n + n^2$, and we will denote
these augmented vectors by a bar symbol (e.g., $\bar{U}$ for the
augmented vector of conserved variables). These vectors are written as
\begin{equation}\label{eq:aug1}
  \bar{U} =\left[
    \begin{array}{c|c}
      U & a_{1,1}, \dots, a_{1,n}, a_{2,1}, \dots, a_{2,n}, \dots
      ,a_{n,1}, \dots ,a_{n,n}
    \end{array}\right]^T,
\end{equation}
$\bar{U} \in \Xi \subset \mathbb{R}^{n + n^2}$. The augmented
vector of sources $\bar{S}$ is constructed as
\begin{equation}\label{eq:aug2}
  \bar{S}=\left[
    \begin{array}{c|c}
      S & 0, \dots, 0
    \end{array}\right]^T ,
\end{equation}
and represents the following mapping $\bar{S}: \Xi \rightarrow
\mathbb{R}^{n+n^2}$. The augmented version of the system matrix
$\mathcal{A}$ is defined as the block-structured matrix
\begin{equation}\label{eq:aug5}
  \bar{\mathcal{A}} = \left[
  \begin{array}{c|c}
    \mathcal{A} & 0\\ \hline
    0 & 0
  \end{array}\right], \; \bar{\mathcal{A}} \in \mathbb{R}^{(n+n^2) \times (n+n^2)}.
\end{equation}
Using Eq. (\ref{eq:aug1}), Eq. (\ref{eq:aug2}) and
Eq. (\ref{eq:aug5})), the augmented form of the system in
Eq. (\ref{eq:systemnDim}) is written as
\begin{equation}\label{eq:systemnDimAug}
\frac{\partial \bar{U}}{\partial t} +
\bar{\mathcal{A}}\left(x\right) \frac{\partial \bar{U}}{\partial
  x} = \bar{S}.
\end{equation}
Linear nonconservative systems can be cast in conservative form by
applying the chain rule of calculus and adding an additional
noncon\-ser\-va\-tive geometric source term that allows to recover
the original system. In this way, Eq. (\ref{eq:systemnDim}) is
rewritten in an extended form as
\begin{equation}\label{eq:systemAugmented}
  \frac{\partial \bar{U}}{\partial t} + \frac{\partial
    \bar{F}}{\partial x} = \bar{\mathcal{K}} \frac{\partial
    \bar{U}}{\partial x} + \bar{S}.
\end{equation}
where $\bar{F}$ is the vector of conservative fluxes, given by a
linear mapping $\bar{F}: \Xi \rightarrow \mathbb{R}^{n+n^2}$ and
defined as
\begin{equation}\label{eq:defFcons}
  \bar{F} = \bar{\mathcal{M}} \bar{U},
\end{equation}
where $\bar{\mathcal{M}} \in \mathbb{R}^{\left(n + n^2\right) \times
  \left(n + n^2 \right)}$ is the Jacobian matrix of the conservative
flux, computed as
\begin{equation}\label{eq:aug6}
  \bar{\mathcal{M}} = \bar{\mathcal{A}} + \bar{\mathcal{K}} = \left[
	\begin{array}{c|c}
	\mathcal{A} & \mathcal{B}\\ \hline
	0 & 0
	\end{array}
        \right] \in \mathbb{R}^{(n+n^2) \times (n+n^2)}.
\end{equation}
Here, $\bar{\mathcal{K}} \in \mathbb{R}^{\left(n+n^2\right) \times
  \left(n+n^2\right)}$ is the coefficient matrix associated with the
noncon\-ser\-va\-tive terms, written as
\begin{equation}\label{eq:aug3}
  \bar{\mathcal{K}}=\left[
  \begin{array}{c|c}
    0 & \mathcal{B}\\
    \hline
    0 & 0
  \end{array}\right],
\end{equation}
and $\mathcal{B} \in \mathbb{R}^{n^2 \times n}$ is the following
diagonal block-structured matrix
\begin{equation}\label{eq:aug4}
  \mathcal{B} = \text{diag} \left( U^T \right).
\end{equation}

\begin{proposition}\label{prop:prop1}
  The matrix $\bar{\mathcal{M}}$ can be diagonalized as $\bar{\Lambda}
  = \bar{\mathcal{P}}^{-1} \bar{\mathcal{M}} \bar{\mathcal{P}}$, with
  $\bar{\mathcal{P}}$ being the column matrix of the right
  eigenvectors of $\bar{\mathcal{M}}$, yielding
  \begin{equation}\label{eq:aug5-1}
    \bar{\Lambda} = \left[
    \begin{array}{c|c}
      \Lambda & 0\\
      \hline
      0 & 0
    \end{array}
    \right], \; \bar{\Lambda} \in \mathbb{R}^{(n+n^2) \times (n+n^2)},
  \end{equation}
  where $\Lambda \in \mathbb{R}^{n\times n} $ is $\Lambda =
  \mathcal{P}^{-1} \mathcal{A} \mathcal{P}$, with $\mathcal{P}$ being
  the column matrix of the right eigenvectors of $\mathcal{A}$.
\end{proposition}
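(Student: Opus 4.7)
The plan is to exploit the block-upper-triangular structure of $\bar{\mathcal{M}}$ and construct the diagonalizing change of basis explicitly, rather than invoking any black-box spectral theorem. Writing $\bar{\mathcal{M}}$ in the $2\times 2$ block form of Eq.~(\ref{eq:aug6}), the characteristic polynomial factors as $\det(\bar{\mathcal{M}} - \mu I) = \det(\mathcal{A} - \mu I)\,\det(-\mu I_{n^2})$, so the spectrum of $\bar{\mathcal{M}}$ is the union of the spectrum of $\mathcal{A}$ with a zero eigenvalue of algebraic multiplicity $n^2$. This already accounts for the block-diagonal shape of $\bar{\Lambda}$ claimed in Eq.~(\ref{eq:aug5-1}); what remains is to display $n + n^2$ linearly independent eigenvectors.

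First I would lift the eigenstructure of $\mathcal{A}$. For each eigenpair $(\lambda^m, e^m)$ of $\mathcal{A}$, the vector $\bar{e}^m := [\,e^m\,;\,0\,]^T \in \mathbb{R}^{n+n^2}$ satisfies $\bar{\mathcal{M}}\bar{e}^m = [\mathcal{A}e^m\,;\,0]^T = \lambda^m \bar{e}^m$, giving $n$ eigenvectors that reproduce the top-left block $\Lambda$ of $\bar{\Lambda}$. Next I would describe the kernel of $\bar{\mathcal{M}}$: a vector $[v_1\,;\,v_2]^T$ lies in $\ker \bar{\mathcal{M}}$ iff $\mathcal{A}v_1 + \mathcal{B}v_2 = 0$. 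Assuming $\mathcal{A}$ is invertible (which is the hyperbolic-physical case of interest, since zero eigenvalues correspond to stationary modes that can be absorbed into the top block), every $v_2 \in \mathbb{R}^{n^2}$ gives a unique $v_1 = -\mathcal{A}^{-1}\mathcal{B}v_2$. Choosing $v_2$ to range over the canonical basis of $\mathbb{R}^{n^2}$ produces $n^2$ eigenvectors for the zero eigenvalue.

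Linear independence of the $n + n^2$ vectors so constructed is then a short argument: any dependence $\sum_m c_m [e^m;0]^T + \sum_k d_k [-\mathcal{A}^{-1}\mathcal{B}\hat{u}_k\,;\,\hat{u}_k]^T = 0$ forces $\sum_k d_k \hat{u}_k = 0$ in the lower block, hence all $d_k = 0$, and then $\sum_m c_m e^m = 0$ in the upper block together with independence of the $e^m$ forces all $c_m = 0$. Assembling these as columns produces $\bar{\mathcal{P}}$, and a direct block computation $\bar{\mathcal{M}}\bar{\mathcal{P}} = \bar{\mathcal{P}}\bar{\Lambda}$ verifies the decomposition in the claimed block form.

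The main obstacle I anticipate is not the computation itself but the edge case where $\mathcal{A}$ has a zero eigenvalue, in which case the kernel recipe $v_1 = -\mathcal{A}^{-1}\mathcal{B}v_2$ breaks down and one must check that $\mathcal{B}v_2 \in \mathrm{range}(\mathcal{A})$ holds for a sufficient supply of $v_2$, or else geometric multiplicity could drop and the matrix fail to be diagonalizable. Since the applications targeted (linear acoustics, hyperbolic heat) have nonzero characteristic speeds, I would either restrict the statement to the nondegenerate case or add a short remark treating the degenerate case by grouping the vanishing eigenvalues of $\mathcal{A}$ with the trivial $n^2$ block and checking Jordan structure separately.
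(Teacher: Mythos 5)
Your proof is correct but follows a genuinely different, more constructive route than the paper's. The paper's own proof only invokes the Schur complement (really, the block upper-triangular structure of $\bar{\mathcal{M}}$) to conclude that the spectrum of $\bar{\mathcal{M}}$ is the spectrum of $\mathcal{A}$ together with $n^2$ zeros, and stops there; it never produces a full set of eigenvectors, so the diagonalizability that the proposition actually asserts is left implicit (and the phrase ``eigenvalues of $\mathcal{B}$'' is loose, since $\mathcal{B}$ is the non-square off-diagonal block --- what matters is the zero diagonal block). You supply exactly the missing piece: the lifted eigenvectors $[e^m;0]^T$, the kernel vectors $[-\mathcal{A}^{-1}\mathcal{B}\hat u_k;\hat u_k]^T$, and the independence argument give an explicit $\bar{\mathcal{P}}$, which is precisely the structure visible in the $3\times 3$ example of Appendix B. The cost is your standing assumption that $\mathcal{A}$ is invertible, and the edge case you flag is genuine rather than cosmetic: for $n=1$, $\mathcal{A}=0$ and $U\neq 0$ one gets
\begin{equation*}
\bar{\mathcal{M}}=\left[\begin{array}{cc}0 & u\\ 0 & 0\end{array}\right],
\end{equation*}
a nonzero nilpotent matrix that is not diagonalizable, so the proposition as stated fails whenever $0\in\operatorname{spec}(\mathcal{A})$ and the columns of $\mathcal{B}$ do not lie in the range of $\mathcal{A}$. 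The paper's eigenvalue-only argument conceals this; your restriction to invertible $\mathcal{A}$ (satisfied by both the acoustics and hyperbolic heat systems treated in Section 3) is the honest fix, and your proof is in that sense more complete than the one in the paper.
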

\begin{proof}
Using the Schur complement (see, e.g., \cite{Haynsworth_1968}), it can
be shown that the eigenvalues of a block matrix are the combined
eigenvalues of its blocks (omitted here). Since all eigenvalues of
$\mathcal{B}$ equal $0$, the eigenvalues of $\bar{\mathcal{M}}$ equal
the eigenvalues of $\mathcal{A}$ augmented by $n^2$ zeros. This
corresponds to $\bar{\Lambda}$ given in Eq.  (\ref{eq:aug5-1}), and
Prop. (\ref{prop:prop1}) is therefore proved.
\end{proof}

\begin{corollary}\label{eq:col1}
If the homogeneous part of the system in Eq. (\ref{eq:systemnDim}) is
hyperbolic, then the homogeneous part of the system in
Eq. (\ref{eq:systemAugmented}) is also hyperbolic.
\end{corollary}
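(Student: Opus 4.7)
The plan is a short two-step argument that leans almost entirely on Proposition \ref{prop:prop1}. The homogeneous part of Eq.~(\ref{eq:systemAugmented}) is $\partial_t \bar{U} + \bar{\mathcal{M}} \partial_x \bar{U} = 0$, so by the definition of hyperbolicity stated earlier in the paper it suffices to prove that $\bar{\mathcal{M}} \in \mathbb{R}^{(n+n^2)\times(n+n^2)}$ is diagonalizable and has $n+n^2$ real eigenvalues.

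For the eigenvalue count, hyperbolicity of the homogeneous part of Eq.~(\ref{eq:systemnDim}) gives that $\mathcal{A}$ is diagonalizable with $n$ real eigenvalues $\lambda^1,\dots,\lambda^n$. Proposition~\ref{prop:prop1} then states exactly that the spectrum of $\bar{\mathcal{M}}$ is the multiset $\{\lambda^1,\dots,\lambda^n,0,\dots,0\}$ with $n^2$ additional zeros, and that $\bar{\mathcal{M}}$ admits the diagonalization $\bar{\Lambda} = \bar{\mathcal{P}}^{-1}\bar{\mathcal{M}}\bar{\mathcal{P}}$ displayed in Eq.~(\ref{eq:aug5-1}). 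Since both the $\lambda^m$ and the appended zeros are real, all $n+n^2$ eigenvalues are real; since $\bar{\mathcal{P}}$ exists, $\bar{\mathcal{M}}$ is diagonalizable. Combining these two facts delivers hyperbolicity of the homogeneous part of Eq.~(\ref{eq:systemAugmented}).

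The only conceptual obstacle I would anticipate is a hidden case distinction when $0$ is itself an eigenvalue of $\mathcal{A}$: then the algebraic multiplicity of $0$ as an eigenvalue of $\bar{\mathcal{M}}$ jumps to at least $n^2+1$, and one has to ensure that the geometric multiplicity matches. Because Proposition~\ref{prop:prop1} already packages diagonalizability into its statement, I simply cite it. If I had to make that step explicit, I would exhibit eigenvectors of the form $(\tilde e^m,0)^T$ for the nonzero modes of $\mathcal{A}$ and eigenvectors $(u,v)^T$ with $\mathcal{A} u + \mathcal{B} v = 0$ for the zero modes, then count that the union spans $\mathbb{R}^{n+n^2}$; but for the present statement no such expansion is needed, and the corollary follows in one line from Proposition~\ref{prop:prop1}.
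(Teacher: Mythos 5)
Your argument is correct and is exactly the route the paper intends: the corollary is placed immediately after Proposition~\ref{prop:prop1} because it follows in one line from the diagonalization $\bar{\Lambda}=\bar{\mathcal{P}}^{-1}\bar{\mathcal{M}}\bar{\mathcal{P}}$ with the real spectrum of $\mathcal{A}$ augmented by $n^2$ zeros, and the paper simply leaves that deduction implicit. Your side remark about the geometric multiplicity of the eigenvalue $0$ is a reasonable caveat, but as you note it is already absorbed into the statement of Proposition~\ref{prop:prop1}.
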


\begin{corollary}\label{col2}
The eigenvalues of $\bar{\mathcal{M}}$, denoted as $\bar{\lambda}^m$,
are given by the diagonal elements of $\bar{\Lambda}$. They are equal
to
\begin{equation}
  \bar{\lambda}^m =
  \begin{cases}
        {\lambda}^m & \text{if } 1\leq m\leq n ,\\
        0 & \text{if } n < m \leq n+n^2,
  \end{cases}
\end{equation}
 where ${\lambda}^m$ are the eigenvalues of $\mathcal{A}$.
\end{corollary}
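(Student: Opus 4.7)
The plan is to derive this corollary almost immediately from Proposition~\ref{prop:prop1}, since the corollary is essentially the statement that ``the diagonal entries of $\bar{\Lambda}$ are the claimed eigenvalues,'' with the block structure of $\bar{\Lambda}$ already established in the proposition.

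First I would invoke Proposition~\ref{prop:prop1} to write $\bar{\mathcal{M}} = \bar{\mathcal{P}} \bar{\Lambda} \bar{\mathcal{P}}^{-1}$, and use the standard fact that the spectrum is invariant under similarity transformations, so $\sigma(\bar{\mathcal{M}}) = \sigma(\bar{\Lambda})$. Because $\bar{\Lambda}$ has the block-diagonal form given in Eq.~(\ref{eq:aug5-1}), with $\Lambda$ of size $n\times n$ in the top-left block and a zero block of size $n^2 \times n^2$ in the bottom-right, its spectrum decomposes as $\sigma(\bar{\Lambda}) = \sigma(\Lambda) \cup \sigma(\mathbf{0}_{n^2\times n^2})$, counted with multiplicity.

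Next I would identify the two pieces: by definition $\Lambda = \mathcal{P}^{-1}\mathcal{A}\mathcal{P}$, so $\sigma(\Lambda) = \{\lambda^1,\dots,\lambda^n\}$ (the eigenvalues of $\mathcal{A}$), while $\sigma(\mathbf{0}_{n^2\times n^2}) = \{0\}$ with algebraic multiplicity $n^2$. Ordering the combined diagonal entries with the first $n$ slots occupied by the $\lambda^m$ and the remaining $n^2$ slots by zeros recovers the case distinction in the statement, namely $\bar{\lambda}^m = \lambda^m$ for $1\le m \le n$ and $\bar{\lambda}^m = 0$ for $n < m \le n+n^2$.

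I do not foresee a genuine obstacle here; the only subtle point is merely bookkeeping about ordering, since the eigenvalues of $\bar{\mathcal{M}}$ are a set (or multiset) and the corollary fixes one particular ordering consistent with the block structure chosen in Proposition~\ref{prop:prop1}. I would simply remark that this ordering is induced by the block decomposition of $\bar{\Lambda}$ and that the proof therefore concludes directly.
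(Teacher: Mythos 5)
Your argument is correct and is essentially the paper's own route: the corollary is stated as an immediate consequence of Proposition~\ref{prop:prop1}, obtained by reading the diagonal entries off the block form of $\bar{\Lambda}$ in Eq.~(\ref{eq:aug5-1}) and using similarity invariance of the spectrum, exactly as you do. Your remark on the ordering being the bookkeeping convention induced by the block decomposition is a fair (and harmless) clarification of what the paper leaves implicit.
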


We construct a numerical scheme in the form of
Eq. (\ref{eq:updatingschemeAugmented}), using the augmented vectors
and matrices. The fluctuations can be written in terms of numerical
fluxes as
\begin{equation}
  \left\{
  \begin{aligned}
    &\bar{D}^-_{i+\frac{1}{2}} = \bar{F}^-_{i+\frac{1}{2}} - \bar{F}_i,\\
    &\bar{D}^+_{i+\frac{1}{2}} = \bar{F}_i - \bar{F}^+_{i-\frac{1}{2}},
  \end{aligned}
  \right.
\end{equation}
with $\bar{F}^-_{i+\frac{1}{2}}$ and $\bar{F}^+_{i-\frac{1}{2}}$ being
the numerical fluxes at cell interfaces. Then,
Eq. (\ref{eq:updatingschemeAugmented}) becomes
\begin{equation}\label{eq:updatingschemeAugmented2}
  \bar{U}_i^{n+1} = \bar{U}_i^n - \frac{\Delta t}{\Delta x} \left(
  \bar{F}_{i+\frac{1}{2}}^- - \bar{F}_{i-\frac{1}{2}}^+ \right)
\end{equation}
A numerical scheme in the form of
Eq. (\ref{eq:updatingschemeAugmented2}) solves the RP
\begin{equation}\label{eq:rpfluxaug}
  \left\{
  \begin{aligned}
    &\text{PDE: } \frac{\partial \bar{U}}{\partial t} + \frac{\partial
      \bar{F} \left(\bar{U}\right)}{\partial x} =
    \bar{\mathcal{K}}\left( \bar{U}\right) \frac{\partial
      \bar{U}}{\partial x} + \bar{S} \left( \bar{U}\right),\\
    &\text{IC: } \accentset{\circ}{\bar{U}} \left(x\right) =
    \begin{cases}
      \bar{U}_i & \text{if } x < x_{i+\frac{1}{2}},\\
      \bar{U}_{i+1} & \text{if } x > x_{i+\frac{1}{2}}.
    \end{cases}
  \end{aligned}
  \right.
\end{equation}
at the interface $i+\frac{1}{2}$. As in the previous section, we
approximate Eq. (\ref{eq:rpfluxaug}) with a constant coefficient
RP as
\begin{equation}\label{eq:rpAppr}
  \left\{
  \begin{aligned}
    &\text{PDE: } \frac{\partial \bar{U}}{\partial t} +
    \widetilde{\bar{J}}_{i+\frac{1}{2}} \frac{\partial
      \bar{U}}{\partial x} =
    \bar{\bar{S}}_{i+\frac{1}{2}}\delta_{x=x_{i+1/2}},\\
    &\text{IC: }
    \accentset{\circ}{\bar{U}} \left(x\right) =
    \begin{cases}
      \bar{U}_i & \text{if } x < x_{i+\frac{1}{2}},\\
      \bar{U}_{i+1} & \text{if } x > x_{i+\frac{1}{2}}.
    \end{cases}
  \end{aligned}
  \right.
\end{equation}
where $\widetilde{\bar{J}}_{i+\frac{1}{2}}$ is the approximate
Jacobian of the conservative flux $\bar{F}\left(\bar{U}\right)$, which
satisfies
\begin{equation}\label{eq:roe requirement}
  \delta \bar{F}_{i+\frac{1}{2}} = \widetilde{\bar{J}}_{i+\frac{1}{2}}
  \delta \bar{U}_{i+\frac{1}{2}},
\end{equation}
and is defined using Roe's averaging procedure \cite{Roe_1981}
\begin{equation}\label{eq:roematrix}
  \widetilde{\bar{J}}_{i+\frac{1}{2}} \equiv \widetilde{
    \bar{\mathcal{M}}}_{i+\frac{1}{2}} = \frac{1}{2}
  \left(\bar{\mathcal{M}}_i + \bar{\mathcal{M}}_{i+1} \right).
\end{equation}
Let $\widetilde{\bar{\mathcal{K}}}_{i+\frac{1}{2}}$ be an approximate
matrix of $\mathcal{K}$ at the interface $i + \frac{1}{2}$ that
satisfies the equality
\begin{equation}\label{eq:aprKmatrix1}
	\int_{\bar{U}_i}^{\bar{U}_{i+1}}
        \bar{\mathcal{K}}\left(\bar{U}\right) d\bar{U} = \left(
        \widetilde{\bar{\mathcal{K}}} \delta \bar{U}
        \right)_{i+\frac{1}{2}},
\end{equation}
provided a linear averaging. The term $\bar{\bar{S}}_{i+\frac{1}{2}}$ is
defined as a singular source term as follows
\begin{equation}\label{eq:sourcedirac2}
  \bar{\bar{S}}_{i+\frac{1}{2}}\delta_{x=x_{i+1/2}}=
  \begin{cases}
    \left(\widetilde{\bar{\mathcal{K}}} \delta \bar{U} + \delta
    \bar{V}\right)_{i + \frac{1}{2}} & \text{if } x = x_{i+\frac{1}{2}},\\
    0 & \text{else}.
  \end{cases}
\end{equation}
Since in our case Eq. (\ref{eq:roematrix}) yields a Roe matrix
\cite{Roe_1981}, $\widetilde{\bar{J}}_{i+\frac{1}{2}}$ is
diagonalizable with ${N_\lambda}=n+n^2$ real eigenvalues
$\widetilde{\lambda}^m_{i+{\frac{1}{2}}}$, and right eigenvectors
$\widetilde{e}^m$. Consequently, $\widetilde{\bar{\mathcal{J}}}$ can
be diagonalized using the column matrix of right eigenvectors
$\widetilde{\bar{\mathcal{P}}}_{i+\frac{1}{2}}$ and its inverse
$\widetilde{\bar{\mathcal{P}}}_{i+\frac{1}{2}}^{-1}$ as
$\widetilde{\bar{J}}_{i+\frac{1}{2}} =
\left(\widetilde{\bar{\mathcal{P}}} \widetilde{\Lambda}
\widetilde{\bar{\mathcal{P}}}^{-1}\right)_{i+\frac{1}{2}}$ with
$\widetilde{\Lambda} _{i+\frac{1}{2}} = \text{diag}\left(
\widetilde{\lambda}^{
  1},\dots,\widetilde{\lambda}^{N_\lambda}\right)$.

The system in Eq. (\ref{eq:rpAppr}) can now be decoupled using
$\widetilde{\bar{\mathcal{P}}}^{-1}$, as previously done in Eq.
(\ref{eq:RPcharacteristicvariables_f}). Analogously, an approximate
flux function $\hat{\bar{F}}\left(x, t\right)$ with a similar
structure as $\hat{\bar{U}}\left(x, t\right)$ can also be
constructed. Intercell values for the fluxes in the vicinity of
interface $i+\frac{1}{2}$ are defined as
\begin{equation} \label{eq:leftrightstates3} 
  \left\{
  \begin{aligned}
    &\bar{F}^{-}_{i+\frac{1}{2}}=\lim_{x\rightarrow x_{i+\frac{1}{2}}^-}  \hat{\bar{F}}\left(x, t\right),\\
    &\bar{F}^{+}_{i+\frac{1}{2}}=\lim_{x\rightarrow x_{i+\frac{1}{2}}^+}  \hat{\bar{F}}\left(x, t\right).
  \end{aligned}
  \right.
\end{equation}
Here, the approximate fluxes on the left and right side of
$x_{i+\frac{1}{2}}$, $\bar{F}^{-}_{i}$ and $\bar{F}^{+}_{i+1}$,
respectively, read
\begin{equation}\label{eq:slinearsolutionF}
  \left\{
  \begin{aligned}
    &\bar{F}^-_{i+\frac{1}{2}} = \bar{F}_{i} + \sum_{\lambda^m<0}
    \left[\left(\widetilde{\lambda} \alpha - \bar{\beta}\right)
      \widetilde{e}\right]^{m}_{i+\frac{1}{2}},\\
    &\bar{F}^+_{i+\frac{1}{2}} = \bar{F}_{i+1} - \sum_ {\lambda^m>0}
    \left[\left(\widetilde{\lambda} \alpha - \bar{\beta}\right)
      \widetilde{e}\right]^{m}_{i+\frac{1}{2}},
  \end{aligned}
  \right.
\end{equation} 
allowing to define the fluctuations as
\begin{equation}  \label{eq:flucts2}
  \left\{
  \begin{aligned}
    &D_{i+\frac{1}{2}}^- = \sum_{\lambda^m<0}
    \left[\left(\widetilde{\lambda} \alpha - \bar{\beta} \right)
      \widetilde{e}\right]^{m}_{i+\frac{1}{2}},\\
    &D_{i+\frac{1}{2}}^+ = \sum_{\lambda^m>0} \left[\left(
      \widetilde{\lambda} \alpha - {\bar{\beta}}\right) \widetilde{e}
      \right]^{m}_{i+\frac{1}{2}},
  \end{aligned}
  \right.
\end{equation} 
where the set of wave and source strengths, $\bar{A}_{i+\frac{1}{2}}$
and $\bar{B}_{i+\frac{1}{2}}$, respectively, are defined as
\begin{equation}
  \left\{
  \begin{aligned}
    &\bar{A}_{i+\frac{1}{2}} = \left[ \alpha^1, \alpha^2, \dots,
      \alpha^{N_\lambda} \right]^T_{i+\frac{1}{2}}
    = \left(\widetilde{\bar{\mathcal{P}}}^{-1} \delta \bar{U}\right)_{i+\frac{1}{2}},\\
    &\bar{B}_{i+\frac{1}{2}} = \left[ \beta^1, \beta^2, \dots,
      \beta^{N_\lambda} \right]^T_{i+\frac{1}{2}}
    = \left(\widetilde{\bar{\mathcal{P}}}^{-1} \left(
    \widetilde{\bar{\mathcal{K}}} \delta \bar{U} + \delta \bar{V}
    \right) \right)_{i+\frac{1}{2}}.
  \end{aligned}
  \right.
\end{equation} 
This scheme satisfies
\begin{equation}\label{eq:relationnsource1_fluxes}
  \left( \widetilde{\bar{\mathcal{K}}} \delta \bar{U} + \delta \bar{V}
  \right)_{i+\frac{1}{2}} = \bar{F}^+_{i+\frac{1}{2}} -
  \bar{F}^-_{i + \frac{1}{2}},
\end{equation} 
which under steady state yields
\begin{equation}\label{eq:finaleq}
  \left( \widetilde{\bar{\mathcal{K}}} \delta \bar{U} + \delta \bar{V}
  \right)_{i+\frac{1}{2}} = \left( \widetilde{\bar{\mathcal{M}}} \delta
  \bar{U} \right)_{i + \frac{1}{2}}.
\end{equation} 
The relation in Eq. (\ref{eq:aug6}) allows to recover Eq.
(\ref{eq:condbalance}) from Eq. (\ref{eq:finaleq}), and hence the
augmented scheme in fluctuation form (Sec. \ref{sectionfluc}) and flux
form (Sec. \ref{sectionflux}) are equivalent.

These rather long derivations might be perceived as unnecessarily 
complicated in the context of linear systems, but may become useful
when extending these approaches to the resolution of nonlinear systems.

\section{Computational test cases\label{sec:comp-test}}

We study the performance of the presented approaches using two linear
hyperbolic systems under nonconservative form. We present transient
simulations for the linear acoustic equations, and both transient and
steady state simulations for the hyperbolized heat equation. For the
augmented solver approach, the fluctuation form and the flux form are
equivalent and yield the same results up to machine accuracy. Below,
we only show the results obtained by using the fluctuation form.

\subsection{Linear acoustics}

The linear acoustic equations are obtained by linearizing the 
isentropic Euler equations \cite{LeVeque_2002}. In the one-dimensional 
case (1D), the system reads
\begin{equation}\label{eq:acousticeq1D}
  \left\{
  \begin{aligned}
    \frac{\partial p}{\partial t} + K(x) \frac{\partial u}{\partial x} =0 \\ 
    \frac{\partial u}{\partial t}  + \frac{1}{\rho(x)} \frac{\partial p}{\partial x}  = 0,
  \end{aligned}
  \right.  
\end{equation}
where $p$ is the pressure, $u$ is the velocity, $K$ is the bulk
modulus of elasticity and $\rho$ is the density. The linear acoustic
equations describe the propagation of small amplitude perturbations of
$p$ and $u$ in the medium. See Appendix \ref{app:acoustic} for the eigenvalues
and eigenvectors of the system.

\subsubsection{Transient solution considering a piecewise constant density}

The augmented scheme is applied here to the 1D linear acoustic
equations to simulate an extreme case from \cite{LeVeque_1997}, where
the speed of sound is discontinuous at an interface between two
media. The performance of the augmented scheme is assessed by
comparing to a high-resolution reference solution computed by the wave
propagation algorithm proposed in \cite{LeVeque_1997,LeVeque_2002}.

The spatial domain is given by $\Omega=[0,1]$ and the simulation time
is $T=0.52$. The properties of the media are
\begin{equation}\label{eq:mediadef1}
\left\{
\begin{aligned}
&K\left(x\right) = 1, \\
&\rho\left(x\right) =
\begin{cases}
    1 & \text{if } x < 0.6, \\
    4 & \text{if } x > 0.6,
\end{cases}
\end{aligned}
\right.
\end{equation}
giving a jump in the wave velocity from $c=1$ on the left region to
$c=0.5$ on the right region. As initial condition, we impose $u=0$ and
a hump in pressure given by
\begin{equation}
 p\left(x\right) =
\begin{cases}
    \hat{p} \sqrt{1 - \left(\frac{x - x_0}{\hat{x}}\right)^2} 
    & \text{if } \left| x - x_0 \right| < \hat{x}, \\
    0 & \text{else},
\end{cases}
\end{equation}
where $x_0 = 0.4$, $\hat{x} = 0.075$ and $\hat{p} = 0.2$. 

We run computations on different mesh sizes from $\Delta x = 0.01$
down to $\Delta x = 0.0025$ using a Courant, Friedrichs and Lewy
condition (CFL) \cite{Courant_1928} with a CFL number of $0.8$. The
solution converges to the reference solution as the grid is
refined. This is seen in Fig. \ref{fig:res_ac1}, where results are
plotted at $t=0.104$, $t=0.26$, $t=0.364$ and $t=0.52$.

\begin{figure}
    \begin{center}
    \includegraphics[width=0.49\textwidth]{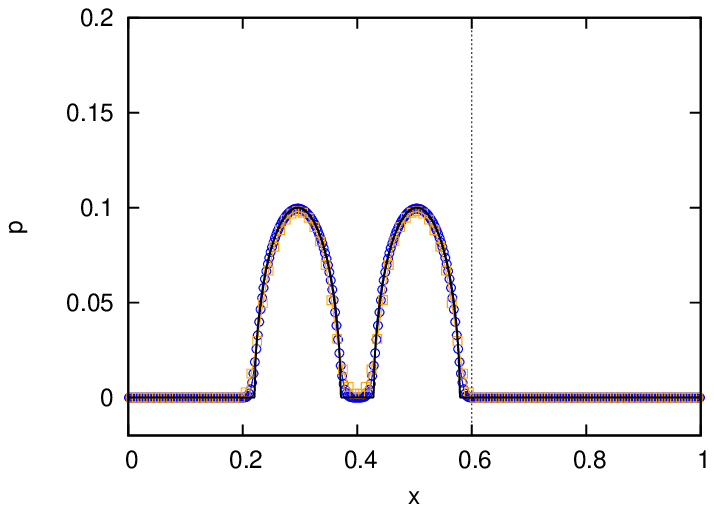}
    \includegraphics[width=0.49\textwidth]{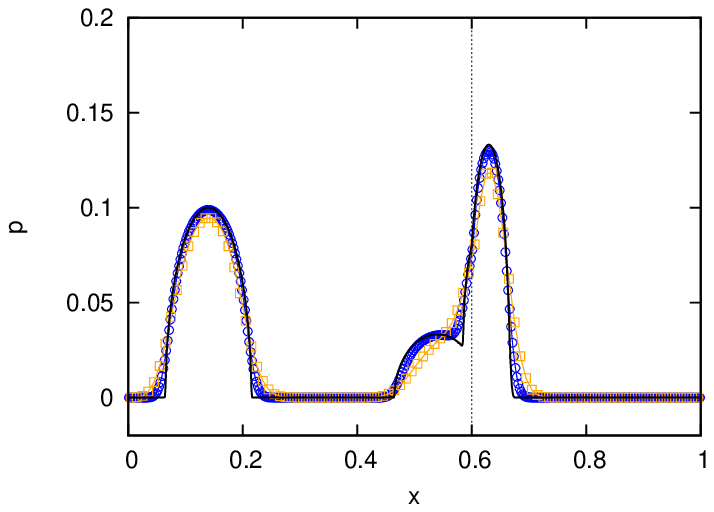}
    \includegraphics[width=0.49\textwidth]{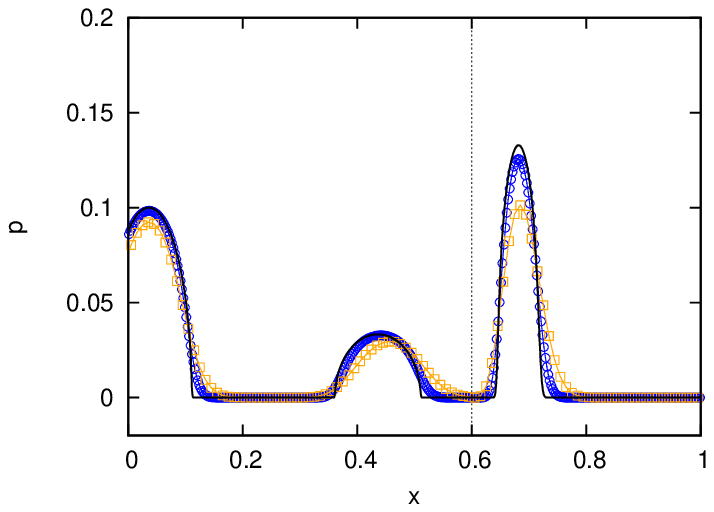}
    \includegraphics[width=0.49\textwidth]{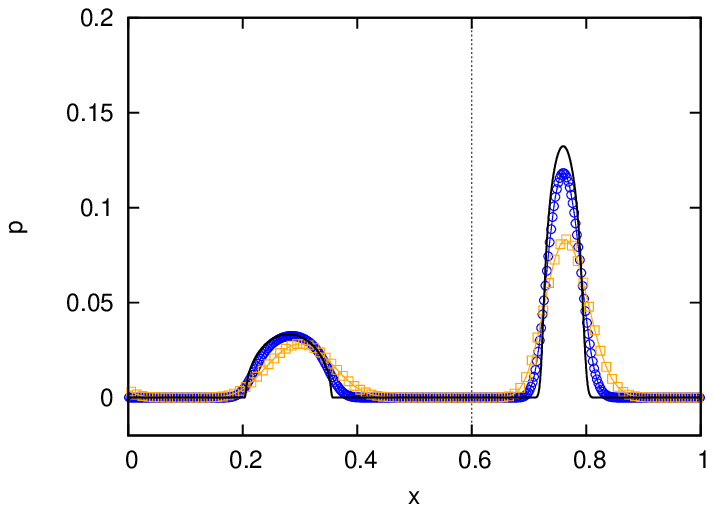}
    \end{center}
    \caption{The computed results ($\Delta x=0.01$
    ($\textcolor[rgb]{1,0.68,0.36}{-\circ-}$) and 
    $\Delta x=0.0025$ ($\textcolor[rgb]{0,0,1}{-\circ-}$)) converge
    to the reference solution (black line) as seen in the
    snapshots ($t=0.104$ (top-left), $t=0.26$ (top-right), 
    $t=0.364$ (bottom-left) and $t=0.52$ (bottom-right)).
    \label{fig:res_ac1}}
 \end{figure}

While the augmented scheme converges to the reference solution, it has
a higher dispersion error that manifests itself in a phase error of
the bumps propagating through the domain. As the mesh resolution is
refined, the dispersion error is rapidly reduced.  Fig. \ref{res_ac2}
shows a detail of the small reflected hump at $t=0.52$ for different
mesh sizes. It must be noted that the reference wave propagation
scheme does not show any dispersion error (i.e., the numerical
representation of the bump is symmetric which respect to the center of
the exact bump).
 
 \begin{figure} \centering
\includegraphics[width=0.49\textwidth]{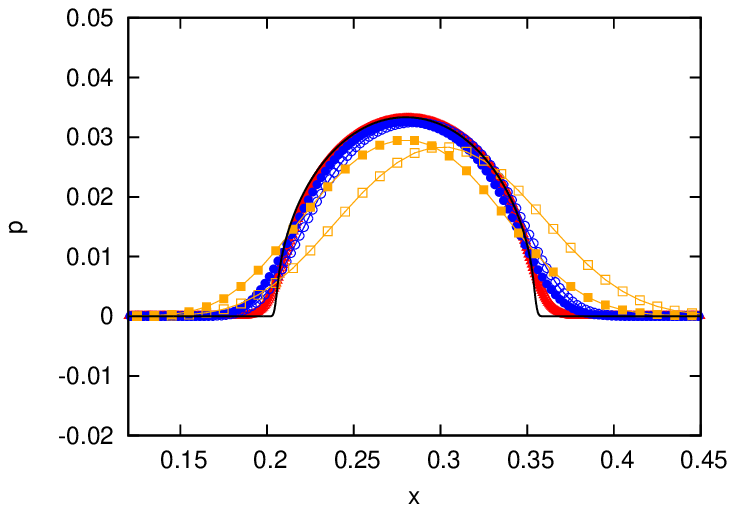}
\includegraphics[width=0.49\textwidth]{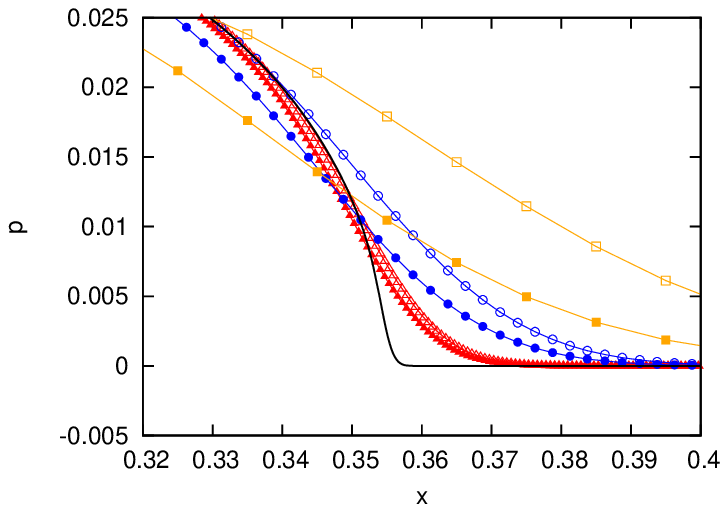}
\caption{The computed results by the augmented solver ($\Delta x=0.01$
($\textcolor[rgb]{1,0.68,0.36}{-\square-}$), $\Delta x=0.0025$
($\textcolor[rgb]{0,0,1}{-\circ-}$) and $\Delta x=0.00125$
($\textcolor[rgb]{1,0,0}{-\triangle-}$)) and reference solver (filled symbols)
show a phase error compared to the reference solution (black line) at $t=0.52$,
which rapidly reduces as the grid is refined.\label{res_ac2} } \end{figure}

\subsection{Hyperbolic heat equation}

The 1D heat equation in its original parabolic form reads
\begin{equation}\label{eq:par-heat}
    c \left( x \right) \rho \left( x \right) \frac{\partial u}{\partial t} + \frac{\partial}{\partial x} \left( -k \left( x \right) \frac{\partial u}{\partial x} \right) = \phi,
\end{equation}
where $u$ is the temperature, $c$ is the specific heat capacity,
$\rho$ is the density, $k$ is the thermal conductivity of the material
and $\phi$ is a heat source. The equation can be hyperbolized using
Cattaneo's relaxation approach \cite{Cattaneo_1958} as
\begin{equation}\label{eq:hyp-heat}
    \left\{
    \begin{aligned}
   c \left(x\right) \rho\left( x \right) 
   \frac{\partial u}{\partial t} 
   + \frac{\partial q}{\partial x} = \phi,\\
    \varepsilon \frac{\partial q}{\partial t} + k \left( x \right) 
    \frac{\partial u}{\partial x} = - q.
    \end{aligned}
    \right.
\end{equation}
Here, we introduce $\varepsilon$ as the relaxation time, and $q$ as
the flow of heat energy per unit area. For $\varepsilon \rightarrow
0$, the hyperbolic form in Eq. (\ref{eq:hyp-heat}) converges to the
original form in Eq. (\ref{eq:par-heat}). See \ref{app:heat} for the
eigenvalues and eigenvectors of the system.

\begin{proposition}\label{prop:varepsilon}
For a given cell size $\Delta x$ and a first order accurate numerical
solver, the value of $\varepsilon$ can be constrained as
\begin{equation}
\left\{
\begin{aligned}
    \varepsilon < \frac{O\left(1\right) \Delta x}{K_1},\\
    K_1 = \frac{1 - 2^{-\frac{1}{2}}}{2^{\frac{1}{2}} - 1}.
\end{aligned}
\right.
\end{equation}
Here, $O\left(1\right)$ denotes the error of a
first order numerical scheme.
\end{proposition}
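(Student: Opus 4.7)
The plan is to establish the bound by balancing two distinct sources of error in the simulation: the modeling error introduced by the hyperbolic regularization of the parabolic heat equation (which scales with $\varepsilon$) and the truncation error of the first-order numerical scheme (which scales with $\Delta x$). The proposition essentially states that $\varepsilon$ must be chosen small enough that the physical modeling error is dominated by, or at most comparable to, the unavoidable numerical error of a first-order scheme; otherwise, refining $\Delta x$ gives no meaningful gain in accuracy.

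First, I would characterize how closely Eq.~(\ref{eq:hyp-heat}) approximates its parabolic limit Eq.~(\ref{eq:par-heat}) as a function of $\varepsilon$. Expanding $q$ asymptotically for small $\varepsilon$ (Chapman--Enskog style) shows that the difference between the hyperbolic and parabolic solutions is, to leading order, linear in $\varepsilon$ with a prefactor that can be computed from the eigenstructure of the system. This gives a modeling error of the form $K_1\varepsilon$. Next, I would invoke the standard first-order truncation estimate $O(1)\Delta x$ of the numerical solver and impose the consistency requirement that the modeling error must not exceed the truncation error, i.e.\ $K_1\varepsilon < O(1)\Delta x$, which immediately yields the stated inequality once solved for $\varepsilon$.

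The main obstacle is pinning down the explicit form of $K_1 = (1-2^{-1/2})/(2^{1/2}-1)$, which simplifies to $1/\sqrt{2}$. The peculiar unsimplified shape strongly suggests it arises from a self-similar refinement argument comparing numerical errors at two nested resolutions related by a factor of $\sqrt{2}$: the numerator $1-2^{-1/2}$ looks like a one-step remainder in a geometric error sequence, while the denominator $\sqrt{2}-1$ is the associated common-ratio normalization. I would therefore derive $K_1$ by writing the asymptotic error at a coarse mesh and at a refined mesh, exploiting the linear scaling of a first-order scheme, and isolating the constant that makes the refinement law consistent with the fixed modeling error $\varepsilon$. This bookkeeping of how numerical dissipation, stiff relaxation, and the wave speed $\sqrt{k/(c\rho\varepsilon)}$ interact across refinements is the delicate step; once the correct ratio is extracted, the rest of the argument is algebraic.
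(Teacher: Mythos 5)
You should first be aware that the paper does not actually prove Prop.~\ref{prop:varepsilon}: its ``proof'' consists of the single line ``Omitted here. See \cite{Montecinos_2014} for a detailed proof.'' So there is no in-paper argument to measure your attempt against; the comparison can only be made at the level of the rationale supplied by the cited reference. Your high-level strategy --- require that the modeling error introduced by the Cattaneo relaxation of Eq.~(\ref{eq:par-heat}) into Eq.~(\ref{eq:hyp-heat}) be dominated by the $O(\Delta x)$ truncation error of the first-order solver --- is indeed the standard rationale behind this kind of constraint and is the one underlying \cite{Montecinos_2014}.

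However, as a proof your proposal has a genuine gap at the only nontrivial quantitative step: the origin of $K_1$. You never derive it; you correctly simplify $K_1=(1-2^{-1/2})/(2^{1/2}-1)=1/\sqrt{2}$ and then offer two mutually inconsistent guesses for where it comes from. On one hand you assert that a Chapman--Enskog expansion gives a modeling error that is ``to leading order, linear in $\varepsilon$'' with prefactor $K_1$; but a linear-in-$\varepsilon$ error compared across halvings or doublings of $\varepsilon$ would produce the factors $1-\tfrac{1}{2}$ and $2-1$, not $1-2^{-1/2}$ and $2^{1/2}-1$. Those square roots are the fingerprint of a $\sqrt{\varepsilon}$ dependence (natural here, since the characteristic speeds of Eq.~(\ref{eq:hyp-heat}) scale as $\varepsilon^{-1/2}$), i.e.\ of comparing a quantity proportional to $\sqrt{\varepsilon}$ at successive halvings and doublings: $\sqrt{\varepsilon}-\sqrt{\varepsilon/2}=(1-2^{-1/2})\sqrt{\varepsilon}$ and $\sqrt{2\varepsilon}-\sqrt{\varepsilon}=(2^{1/2}-1)\sqrt{\varepsilon}$. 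On the other hand, your alternative guess (a self-similar refinement of $\Delta x$ by factors of $\sqrt{2}$) does not connect to $\varepsilon$ at all and gives no route to the stated inequality, which is linear in $\Delta x$. Until you commit to one scaling law for the relaxation error and carry the computation through to the bound $\varepsilon < O(1)\Delta x/K_1$ --- checking in particular whether a $\sqrt{\varepsilon}$ error law is even compatible with a bound linear, rather than quadratic, in $\Delta x$ --- the argument remains a plausible narrative rather than a proof. The honest fix is either to reproduce the derivation from \cite{Montecinos_2014} in full or to do what the paper does and simply defer to it.
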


\begin{proof}
Omitted here. See \cite{Montecinos_2014} for a detailed proof.
\end{proof}

\begin{proposition}\label{prop:heat-solution}
Let $f:\, \mathbb{R} \rightarrow \mathbb{R}$ be a function with
compact support on a domain $\Omega$ that satisfies $\frac{\partial f
  \left( x \right)}{\partial x} = \mathscr{M} f^{-1} \left(x\right)$,
with $f^{-1}: \mathbb{R} \rightarrow \mathbb{R}$, and $f\left(x\right)
f^{-1}\left(x\right)=1$.  Then, if $\phi=0$, $c\left(x\right) =
\rho\left(x\right) = 1$, and $k\left(x\right) = f$, then $f$ is a
steady state solution of Eq. (\ref{eq:par-heat}) as well as
Eq. (\ref{eq:hyp-heat}).
\end{proposition}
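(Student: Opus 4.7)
The plan is a direct substitution check. First I would unpack the hypothesis $f(x)\,f^{-1}(x) = 1$: this makes clear that $f^{-1}$ denotes the pointwise multiplicative reciprocal of $f$, not a functional inverse, so the condition $\partial f/\partial x = \mathscr{M}\, f^{-1}(x)$ is equivalent to
\begin{equation*}
f(x)\,\frac{\partial f}{\partial x} = \mathscr{M},
\end{equation*}
a spatial constant. The key observation is then that the physical heat flux $-k\,\partial u/\partial x$ evaluated along $u=f$, $k=f$ coincides with the constant $-\mathscr{M}$, after which both equations are verified almost by inspection.

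For the parabolic form in Eq. (\ref{eq:par-heat}), I would substitute $u(x,t)=f(x)$ together with $c=\rho=1$, $k=f$, and $\phi=0$. The time derivative vanishes because $f$ depends only on $x$, and the spatial term reduces to
\begin{equation*}
\frac{\partial}{\partial x}\left(-f\,\frac{\partial f}{\partial x}\right) = -\frac{\partial \mathscr{M}}{\partial x} = 0,
\end{equation*}
which matches $\phi=0$. For the hyperbolic form in Eq. (\ref{eq:hyp-heat}), I would impose both $\partial u/\partial t = 0$ and $\partial q/\partial t = 0$. The second equation then prescribes $q = -k\,\partial u/\partial x = -\mathscr{M}$, a spatial constant, and substituting into the first equation gives $\partial q/\partial x = 0 = \phi$. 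Hence the pair $(u,q) = (f,\,-\mathscr{M})$ is an admissible steady state of the hyperbolized system.

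There is no serious obstacle in the argument; it is really a one-line computation once the reciprocal interpretation of $f^{-1}$ is fixed. The only point worth emphasising at the end is that $\varepsilon$ drops out of the steady-state analysis entirely, which is precisely why the Cattaneo relaxation preserves the same equilibrium profile as the underlying parabolic model, and why this test is a natural benchmark for the well-balanced property discussed via Corollary \ref{cor:exact-source}: the equilibrium heat flux $q = -\mathscr{M}$ must be recovered by the scheme to machine accuracy even in the presence of a variable coefficient $k(x)=f(x)$.
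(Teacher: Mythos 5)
Your proposal is correct and follows essentially the same route as the paper's proof: both reduce the claim to the observation that the flux $q=-k\,\partial u/\partial x=-f\cdot\mathscr{M}f^{-1}=-\mathscr{M}$ is a spatial constant, so its divergence vanishes and both the parabolic and hyperbolized systems are satisfied in steady state. Your version is merely more explicit about the reciprocal reading of $f^{-1}$ and about checking each equation of Eq.~(\ref{eq:hyp-heat}) separately, which the paper compresses into one line.
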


\begin{proof}
Since
\begin{equation}
    \frac{\partial u}{\partial x} = \mathscr{M} f^{-1},
\end{equation}
we obtain
\begin{equation}\label{eq:steady-heat-q}
   q = - k \left( x \right) \frac{\partial u}{\partial x} = - \mathscr{M},
\end{equation}
which yields a zero flux in the first equation of the system in Eq.
(\ref{eq:par-heat}) and Eq. (\ref{eq:hyp-heat}). Hence, $u$ does not
change in time and Prop. \ref{prop:heat-solution} is thus proved.
\end{proof}

\begin{corollary}\label{cor:sine}
Consider Eq. (\ref{eq:hyp-heat}) in an infinite 1D domain. If $\phi=0$
and $k \left( x \right) = \frac{\mathscr{M}}{\mathscr{W}\text{sin}
  \left( x \right) + \mathscr{C}}$, then $u\left( x \right) = -
\mathscr{W}\text{cos} \left(x \right) + \mathscr{C} x$ is a steady
state solution of the system for $\forall{\left(\mathscr{M},
  \mathscr{W},\mathscr{C}\right)} \in \mathbb{R}: \vert \mathscr{C}
\vert > \vert \mathscr{W}\vert $.
\end{corollary}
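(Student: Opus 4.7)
The plan is to verify the corollary by direct substitution into the steady-state form of Eq. (\ref{eq:hyp-heat}), rather than invoking Prop. \ref{prop:heat-solution} literally. The latter assumes $k = u$, which does not hold here; however, the essential mechanism is the same, namely that the product $k(x)\,\partial u/\partial x$ collapses to the constant $\mathscr{M}$. So I would frame the proof as a one-line computation followed by a check of both equations.

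The key observation is that, with $u(x) = -\mathscr{W}\cos(x) + \mathscr{C}\,x$, one has $\partial u/\partial x = \mathscr{W}\sin(x) + \mathscr{C}$, which is precisely the reciprocal of $k(x)/\mathscr{M}$. Hence the heat flux becomes
\begin{equation}
  q = -k(x)\,\frac{\partial u}{\partial x}
    = -\frac{\mathscr{M}}{\mathscr{W}\sin(x)+\mathscr{C}}\bigl(\mathscr{W}\sin(x)+\mathscr{C}\bigr)
    = -\mathscr{M},
\end{equation}
a spatial (and temporal) constant. Substituting into the first equation of Eq. (\ref{eq:hyp-heat}) with $\phi=0$ and $c=\rho=1$ gives $\partial u/\partial t = -\partial q/\partial x = 0$, confirming the steady-state property for $u$. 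Substituting into the second equation with $\partial q/\partial t = 0$ reduces it to the algebraic identity $k\,\partial u/\partial x = -q$, i.e., $\mathscr{M} = \mathscr{M}$, which is satisfied for every value of the relaxation time $\varepsilon$.

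The only genuinely nontrivial point — and the one that forces the hypothesis $|\mathscr{C}|>|\mathscr{W}|$ — is the well-posedness of $k(x)$ on the whole real line. Here I would invoke the triangle inequality: since $|\mathscr{W}\sin(x)+\mathscr{C}| \geq |\mathscr{C}| - |\mathscr{W}|\,|\sin(x)| \geq |\mathscr{C}| - |\mathscr{W}| > 0$, the denominator never vanishes, so $k$ is bounded and of constant sign on $\mathbb{R}$, making the computation above valid everywhere. I do not anticipate any substantive obstacle beyond this observation; the corollary is essentially a direct verification once the identity $k(x)\,\partial u/\partial x \equiv \mathscr{M}$ is spotted.
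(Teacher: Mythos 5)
Your verification is correct and follows essentially the same route the paper intends: the paper leaves the corollary unproved as a consequence of Prop.~\ref{prop:heat-solution}, whose own proof is precisely your key identity $q=-k\,\partial u/\partial x=-\mathscr{M}=\text{const}$, hence $\partial q/\partial x=0$ and steadiness of both equations. Your two additions --- noting that the proposition as literally stated assumes $k=u$ and so must be re-instantiated, and showing via $\vert\mathscr{W}\sin(x)+\mathscr{C}\vert\geq\vert\mathscr{C}\vert-\vert\mathscr{W}\vert>0$ why the hypothesis $\vert\mathscr{C}\vert>\vert\mathscr{W}\vert$ is needed --- are correct clarifications of points the paper leaves implicit, not a different argument.
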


\begin{corollary}\label{prop:heat-solution-disc}
Consider Eq. (\ref{eq:hyp-heat}) in the domain $\Omega=[a,b]$. Let
$\phi=0$ and
\begin{equation}\label{mediadef00}
 k\left(x\right) = 
 \begin{cases}
     k_1 & \text{if } x \leq \frac{b+a}{2}-\frac{\delta}{2},\\
     \tilde{k} & \text{if } \frac{b+a}{2} - \frac{\delta}{2} < x < \frac{b+a}{2} + \frac{\delta}{2},\\
      k_2 & \text{if } x \geq \frac{b+a}{2}+\frac{\delta}{2},
 \end{cases}
\end{equation}
with $k_1,k_2 \in \mathbb{R}$ being nonzero constant conductivities,
$\tilde{k}=(k_1+k_2)/2$ and $\delta \in \mathbb{R}$ being the
conductivity and the thickness of the transition layer.  Then $q\left(
x \right) = \mathscr{M}$ and
\begin{equation}
  u(x)=
  \begin{cases}
  -\frac{ \mathscr{M}}{k_2} x & \text{if } x \leq \frac{b+a}{2}-\frac{\delta}{2},\\
   - \mathscr{M}\left(\frac{1}{k_1}x+\frac{k_1-k_2}{k_1 k_2}x^* + \frac{1}{k_1}\delta 
   - \frac{2}{\tilde{k}}\delta\right) & \text{if } x \geq \frac{b+a}{2}+\frac{\delta}{2},\\
  \end{cases}
\end{equation}
with $x^*=\frac{b+a}{2}-\frac{\delta}{2}$ is a weak solution 
of Eq. (\ref{eq:hyp-heat}) at the steady state, provided that a 
linear averaging of the coefficient matrix in Eq. (\ref{eq:hyp-heat}) is used.
\end{corollary}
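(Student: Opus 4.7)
The plan is to solve the steady-state reduction of Eq. (\ref{eq:hyp-heat}) by integrating on each of the three constant-$k$ subregions, and then to match the three pieces using the linear-averaging jump condition Eq. (\ref{eq:condbalance}) at the two interior interfaces. Setting $\partial_t u = \partial_t q = 0$ together with $\phi = 0$ and $c = \rho = 1$ collapses the system to $\partial_x q = 0$ and $q = -k(x)\,\partial_x u$. The first equation immediately yields $q(x) = \mathscr{M}$, constant on $\Omega$, which proves the $q$-part of the claim. Substituting back, the second equation becomes $\partial_x u = -\mathscr{M}/k(x)$, whose piecewise-constant right-hand side integrates on each constant-$k$ region to a linear function of $x$ with an integration constant to be determined by matching at $x^* = (b+a)/2 - \delta/2$ and at $x^* + \delta$.

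I would then fix the constant on the outer left region so that the normalization $u = -\mathscr{M} x / k_{\text{left}}$ holds there (the steady state is only determined up to an additive constant), and use continuity of $u$ successively at $x^*$ and at $x^* + \delta$ to fix the constants on the middle and outer right pieces. Carrying out this two-step matching produces the piecewise formula stated in the corollary, with the $-(2/\tilde{k})\,\delta$ contribution originating from integrating $1/\tilde{k}$ across the transition layer of width $\delta$, and the remaining terms coming from the two continuity relations evaluated at $x^*$ and $x^* + \delta$.

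The delicate step, and the place where the linear-averaging hypothesis enters essentially, is justifying that continuity of $u$ is the correct matching condition in the weak sense associated with the augmented scheme. I would invoke Eq. (\ref{eq:condbalance}) at a generic interface where $k$ jumps from $k_\ell$ to $k_r$: since $q = \mathscr{M}$ is already constant, $\delta q = 0$ and the first component of $\widetilde{\mathcal{A}}\,\delta U = \delta V$ is trivially satisfied, while the second component reduces to $\tilde{k}\,\delta u = -\mathscr{M}\,\Delta x$ with $\tilde{k} = (k_\ell + k_r)/2$. In the sharp-interface limit $\Delta x \to 0$ this forces $\delta u = 0$, i.e., continuity of $u$; equivalently, the transition layer of width $\delta$ with constant $\tilde{k}$ is precisely the regularization produced by linear averaging, so the piecewise $u$ obtained by enforcing continuity at $x^*$ and $x^* + \delta$ is a weak steady-state solution in the path-conservative sense compatible with the scheme. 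The expected obstacle is this path-conservative bookkeeping at the two interfaces; once it is settled, verification that $(u, \mathscr{M})$ satisfies the two PDEs pointwise inside each constant-$k$ region is immediate by construction.
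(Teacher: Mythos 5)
The paper states this result as a corollary and gives no proof of it, so there is nothing to compare against route-wise; your strategy --- reduce the steady state to $\partial_x q=0$ and $k\,\partial_x u=-q$, integrate on each constant-$k$ piece, and match by continuity of $u$ at the two edges of the width-$\delta$ layer whose conductivity $\tilde{k}=(k_1+k_2)/2$ encodes the linear averaging --- is the natural (essentially the only) one, and the $q$-part and the reduction are fine. One small correction of emphasis: inside this corollary the three-piece $k$ is an honest bounded coefficient and $u$ is Lipschitz, so the nonconservative product is unambiguous and continuity of $u$ needs no sharp-interface or path-conservative limiting argument; the averaging hypothesis enters only through the value $\tilde{k}$ assigned to the layer. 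Also, the first component of Eq. (\ref{eq:condbalance}) is the equation that \emph{forces} $\delta q=0$; it is something to be checked, not ``trivially satisfied.''

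The genuine gap is that you assert, without carrying it out, that the two-step matching ``produces the piecewise formula stated in the corollary,'' and it does not. First, as printed the statement assigns slope $-\mathscr{M}/k_2$ to $u$ on the region where $k=k_1$ and vice versa, so $q=-k\,\partial_x u$ equals $\mathscr{M}k_1/k_2\neq\mathscr{M}$ there; the case conditions in the $u$-formula must be swapped (this is consistent with the paper's later test case, where $u_{out}=-\mathscr{M}b/k_2$ is imposed at the right end, where $k=k_2$). Your notation ``$k_{\text{left}}$'' silently papers over this orientation problem rather than resolving it. Second, even after fixing the orientation and anchoring $u=-\mathscr{M}x/k_2$ on the $k_2$ side, continuity at $x^*$ and $x^*+\delta$ gives on the other side
\begin{equation*}
 u(x)=-\mathscr{M}\left(\frac{x}{k_1}+\frac{k_1-k_2}{k_1k_2}\,x^*
 +\delta\left(\frac{1}{k_2}-\frac{1}{\tilde{k}}\right)\right),
\end{equation*}
whose $\delta$-coefficient differs from the stated $\frac{1}{k_1}\delta-\frac{2}{\tilde{k}}\delta$ for generic $k_1\neq k_2$ (e.g., $k_1=1$, $k_2=4$ gives $-0.15\,\delta$ versus $+0.2\,\delta$). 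Your own explanation of the $-\tfrac{2}{\tilde{k}}\delta$ term --- ``integrating $1/\tilde{k}$ across the transition layer of width $\delta$'' --- actually yields $\delta/\tilde{k}$, off by a factor of two from what you claim it produces. So the proof as proposed either verifies a formula different from the one in the statement or must explicitly identify and correct the statement's typos; as written it does neither, and the central computation it defers is precisely where the claim fails.
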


\begin{corollary}\label{cor:heat-solution-disc-exact}
In the limit when $\delta \rightarrow 0$, the expressions in  Cor. 
\ref{prop:heat-solution-disc} represents the solution of a pure discontinuous 
transition in the medium properties (e.g., the conductivity). Such solution reads 
$q\left( x \right) = \mathscr{M}$ and 
 \begin{equation}
  u\left(x\right) =
  \begin{cases}
    -\frac{ \mathscr{M}}{k_2}x & \text{if } x  \leq \frac{b+a}{2},\\
    - \mathscr{M}\left(\frac{1}{k_1}x+\left(\frac{k_1-k_2}{k_1 k_2}\right) 
    \left(\frac{b+a}{2}\right) \right) & \text{if } x \geq \frac{b+a}{2}.\\
  \end{cases}
\end{equation}
\end{corollary}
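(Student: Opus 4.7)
The plan is to establish Cor. \ref{cor:heat-solution-disc-exact} as a direct pointwise limit of the smoothed steady-state expressions given in Cor. \ref{prop:heat-solution-disc}, and then verify that the limiting pair $(u,q)$ is a weak steady-state solution of Eq. (\ref{eq:hyp-heat}) with a pure jump in conductivity at $x=(b+a)/2$.

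For the heat flux the argument is immediate: $q(x)=\mathscr{M}$ does not depend on $\delta$, so $\lim_{\delta\to 0}q(x)=\mathscr{M}$ identically. For the temperature, the left-branch formula $-\mathscr{M}x/k_2$ is already $\delta$-independent and its domain of validity $x\le (b+a)/2-\delta/2$ expands to $x\le (b+a)/2$ in the limit. For the right branch I would observe that $x^{*}=(b+a)/2-\delta/2\to(b+a)/2$, and that the residual constant $\delta/k_1-2\delta/\tilde{k}$ is $O(\delta)$ and thus vanishes as $\delta\to 0$. Substituting these into the right-branch expression of Cor. \ref{prop:heat-solution-disc} reproduces precisely the formula claimed here, now valid on $x\ge (b+a)/2$.

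I would then confirm that this limit pair is a weak equilibrium solution of Eq. (\ref{eq:hyp-heat}) with $\phi=0$ and piecewise-constant conductivity. On each smooth side of the interface the equations hold classically by Prop. \ref{prop:heat-solution}: $q=\mathscr{M}$ gives $\partial_x q=0$, and $u$ linear in $x$ with slope $-\mathscr{M}/k_j$ satisfies $k_j\,\partial_x u + q = 0$. Continuity of $q$ across the interface is trivial since $q\equiv\mathscr{M}$, so the first PDE is satisfied weakly. Continuity of $u$ at $x=(b+a)/2$ follows from the short algebraic identity $1/k_1+(k_1-k_2)/(k_1 k_2)=1/k_2$, so both one-sided limits equal $-\mathscr{M}(b+a)/(2k_2)$, confirming that the limit function belongs to the admissible class of weak solutions in which $u$ is continuous.

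The main obstacle is that the nonconservative product $k(x)\,\partial_x u$ is not defined a priori across a pure jump, so the result depends on a selection principle. The principle implicit in Cor. \ref{prop:heat-solution-disc} is the linear average $\tilde{k}=(k_1+k_2)/2$ inside the transition layer, which coincides with the averaging used throughout the paper (see Eq. (\ref{eq:eqCond12def})) and is consistent with the generalized Rankine--Hugoniot jump Eq. (\ref{eq:condbalance2}). Once this selection is fixed, the vanishing-thickness limit is well defined and delivers the weak solution claimed.
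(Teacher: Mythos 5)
The paper states this corollary without any proof, so there is nothing to compare against; your argument is the evident intended one (a direct pointwise limit of the finite-$\delta$ expressions) and it is correct: $q=\mathscr{M}$ is $\delta$-independent, $x^{*}\to\frac{b+a}{2}$, the residual term $\frac{\delta}{k_1}-\frac{2\delta}{\tilde{k}}$ is $O(\delta)$ and vanishes, and the identity $\frac{1}{k_1}+\frac{k_1-k_2}{k_1k_2}=\frac{1}{k_2}$ gives continuity of $u$ at the interface. Your additional verification that the limit is a weak steady solution, and your remark that the result hinges on the selection principle $\tilde{k}=(k_1+k_2)/2$ for the nonconservative product, go beyond what the corollary asks but are consistent with the framework of Sec.~\ref{geometricreintS}.

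One caveat: your check that ``$u$ linear with slope $-\mathscr{M}/k_j$ satisfies $k_j\,\partial_x u+q=0$ on each side'' tacitly assumes the conductivity on each branch matches the $k_j$ appearing in the corresponding slope. As literally printed, Eq.~(\ref{mediadef00}) assigns $k_1$ to the left region while the left branch of $u$ in Cor.~\ref{prop:heat-solution-disc} has slope $-\mathscr{M}/k_2$ (and similarly the test case sets $u_{out}=-\mathscr{M}b/k_2$ at the right end), so the index labels are swapped somewhere in the source. Your proof silently adopts the consistent assignment; that is the right reading, but it should be stated explicitly rather than absorbed into the notation $k_j$.
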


\begin{corollary}\label{cor:sol}
When considering the numerical resolution of Eq. (\ref{eq:hyp-heat})
by means of an augmented scheme and $\delta=\Delta x$, then the
solution in Cor. \ref{prop:heat-solution-disc} corresponds to the
numerical solution provided by the scheme under steady state. This
means that, in the case of a pure discontinuous transition, the
numerical scheme will artificially enforce the presence of a
transition layer of size $\delta=\Delta x$.  As the grid is refined,
the size of the transition layer is reduced and tends to zero, thus
approaching the exact solution of the pure discontinuous transition.
\end{corollary}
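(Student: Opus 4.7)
The plan is to combine Cor. \ref{cor:exact-source}, which guarantees exact preservation of the heat flux $q = \mathscr{M}$ at the discrete level when $\phi = 0$, with the steady-state equilibrium condition in Eq. (\ref{eq:condbalance2}) in order to identify the cell averages produced by the augmented scheme with the piecewise linear profile of Cor. \ref{prop:heat-solution-disc}.

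First I would set up the discretization so that the physical jump in $k$ lies exactly at a cell interface $i+\frac{1}{2}$, with $k_j = k_1$ for $j \leq i$ and $k_j = k_2$ for $j \geq i+1$. Since $\phi = 0 \in P_1(\Omega)$ trivially, Cor. \ref{cor:exact-source} applied to the conservative variable $q$ yields $q_j = \mathscr{M}$ for every cell at the steady state, so that the source vector of the hyperbolic heat equation reduces to the spatially constant quantity $S_j = (0, -\mathscr{M}/\varepsilon)^T$.

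Next I would evaluate Eq. (\ref{eq:condbalance2}) at each interface. Writing Eq. (\ref{eq:hyp-heat}) in the standard form Eq. (\ref{eq:systemnDim}) with $c\rho = 1$, the linearly averaged Roe matrix at the interface $j+\frac{1}{2}$ depends only on the scalar average $\tilde{k}_{j+1/2} = \frac{1}{2}(k_j + k_{j+1})$, and a direct computation of $\widetilde{\mathcal{A}}^{-1}$ gives $\delta q_{j+1/2} = 0$ together with
\begin{equation*}
\delta u_{j+1/2} = -\frac{\mathscr{M}\,\Delta x}{\tilde{k}_{j+1/2}}.
\end{equation*}
At every interface away from $i+\frac{1}{2}$, $\tilde{k}_{j+1/2}$ equals either $k_1$ or $k_2$, and the discrete slope of $u$ matches exactly the continuous slope $-\mathscr{M}/k$ prescribed by Cor. \ref{prop:heat-solution-disc} in the two constant-conductivity regions. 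At the single interface $i+\frac{1}{2}$, one instead obtains the effective slope $-\mathscr{M}/\tilde{k}$ with $\tilde{k} = (k_1+k_2)/2$, which is precisely the conductivity used inside the transition layer of the continuous weak solution; accumulating the jump $\delta u_{i+1/2}$ over the single cell width $\Delta x$ produces the same temperature offset as the continuous solution does across a transition zone of thickness $\delta = \Delta x$ with conductivity $\tilde{k}$.

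To finish, I would match the additive constant in the temperature profile between the discrete solution and the expression in Cor. \ref{prop:heat-solution-disc} using a reference value (e.g. the boundary condition or a specified pointwise value), yielding the cellwise identification claimed. The convergence statement then follows from Cor. \ref{cor:heat-solution-disc-exact}, since $\delta = \Delta x \to 0$ as the grid is refined. The main technical obstacle is the alignment of the conductivity discontinuity with a cell interface: if the jump falls strictly inside a cell, the averaged conductivities at the two bracketing interfaces differ from the exact $k_1$ and $k_2$, and one must verify that the numerical transition is merely shifted by at most one cell while remaining of width $\Delta x$, which amounts to a bookkeeping argument on the cell-averaged source term and the piecewise constant FV representation of $k$.
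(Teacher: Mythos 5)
Your proposal is correct, and the paper in fact gives no explicit proof of Cor.~\ref{cor:sol}; the argument you supply is precisely the one the text leaves implicit. Evaluating the discrete equilibrium condition of Eq.~(\ref{eq:condbalance2}) with $\widetilde{\mathcal{A}}^{-1}$ for the hyperbolic heat system correctly yields $\delta q_{j+\frac{1}{2}}=0$ (which already gives $q_j=\mathscr{M}$ from the boundary datum, making the separate appeal to Cor.~\ref{cor:exact-source} redundant but harmless) and $\delta u_{j+\frac{1}{2}}=-\mathscr{M}\Delta x/\tilde{k}_{j+\frac{1}{2}}$, so the single interface carrying the averaged conductivity $\tilde{k}=(k_1+k_2)/2$ reproduces exactly the transition layer of width $\delta=\Delta x$ in Cor.~\ref{prop:heat-solution-disc}; your closing remark on the alignment of the jump with a cell interface is a legitimate caveat (and is satisfied in the paper's test configuration). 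The only discrepancy — your slopes $-\mathscr{M}/k_1$ on the left and $-\mathscr{M}/k_2$ on the right versus the reversed indices in the displayed formula of Cor.~\ref{prop:heat-solution-disc} — traces to an index swap in the paper's statement, not to a flaw in your derivation.
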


\begin{proposition}\label{prop:heat-solution-source}
Consider Eq. (\ref{eq:hyp-heat}) in the domain $\Omega=[a,b]$.  If
$\phi,k \in \mathbb{R}$ are nonzero constants, then $q\left( x \right)
= q(a) + \phi (x-a)$ and $u\left( x \right) = u(a) -
\frac{q(a)}{k}(x-a) - \frac{\phi}{2k}(x-a)^2$ is a steady solution of
Eq. (\ref{eq:hyp-heat}), with $q(a)$ the heat flux at the inlet and
$u(a)$ the temperature at the inlet.
\end{proposition}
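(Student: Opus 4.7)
The plan is to verify the claim by direct substitution: impose the steady-state ansatz $\partial_t u = \partial_t q = 0$, reduce the hyperbolic system to a pair of ODEs in $x$, and integrate them using the constancy of $\phi$ and $k$ together with the prescribed inlet values.

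First I would take Eq. (\ref{eq:hyp-heat}) at steady state, which eliminates the time derivatives and yields the decoupled ODE system
\begin{equation*}
\frac{d q}{d x} = \phi, \qquad k\,\frac{d u}{d x} = -q.
\end{equation*}
Since $\phi$ is a nonzero real constant, the first ODE integrates immediately from $a$ to $x$ with the inlet condition $q(a)$ to give $q(x) = q(a) + \phi(x-a)$, which is the proposed expression for the heat flux.

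Next I would substitute this expression for $q$ into the second ODE and, using that $k$ is constant, obtain
\begin{equation*}
\frac{d u}{d x} = -\frac{q(a)}{k} - \frac{\phi}{k}(x-a).
\end{equation*}
Integrating from $a$ to $x$ with inlet condition $u(a)$ produces exactly the claimed quadratic profile $u(x) = u(a) - \frac{q(a)}{k}(x-a) - \frac{\phi}{2k}(x-a)^2$. A short consistency check by plugging both expressions back into the full system in Eq. (\ref{eq:hyp-heat}) confirms that all time derivatives vanish and both balance laws are satisfied pointwise, completing the proof.

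There is no real obstacle: the only subtleties are (i) that $k$ and $\phi$ are assumed to be \emph{constants} in $x$, without which the integration of the second ODE would not close in elementary form, and (ii) that the inlet values $q(a)$ and $u(a)$ are the two constants of integration needed to make the steady ODE system well posed. The stiff relaxation term $-q$ on the right-hand side of the second equation of Eq. (\ref{eq:hyp-heat}) plays no role at equilibrium other than fixing $q = -k\,\partial_x u$, which is precisely the classical Fourier law recovered in the $\varepsilon \to 0$ limit discussed earlier.
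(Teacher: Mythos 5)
Your proof is correct and follows essentially the same route as the paper's: reduce Eq. (\ref{eq:hyp-heat}) at steady state to the ODE pair $\partial q/\partial x = \phi$ and $k\,\partial u/\partial x = -q$, and check that the stated expressions satisfy it. The only difference is cosmetic --- you integrate the ODEs forward from the inlet data to derive the formulas, while the paper simply verifies them by substitution.
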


\begin{proof}
The given expressions for the discharge and temperature in 
Prop. \ref{prop:heat-solution-source} satisfy the system in 
Eq. (\ref{eq:hyp-heat}), namely the equations  
 \begin{equation}
    \left\{
    \begin{aligned}
   &\frac{\partial q}{\partial x} = \phi,\\
    &k  \frac{\partial u}{\partial x} = - q.
    \end{aligned}
    \right.
\end{equation}
 Prop. \ref{prop:heat-solution-source} is thus proved.
\end{proof}

In the following, we show results obtained by the augmented solver for
the hyperbolic heat equation at the steady state.  For contrast, we
compute results with a deliberately not well-balanced wave propagation
scheme, which uses a centered integration of the source term.

We note that high-order accurate steady state solutions can be
computed using wave propagation algorithms (e.g., by following the
approaches in \cite{LeVeque_1986} (fractional time stepping) and
\cite{LeVeque_2011} (path-conservative approach)). The presented
results are for illustration purposes only, and do not represent the
state of the art wave propagation solvers.

\subsubsection{Steady solution considering a constant conductivity}

This case is devoted to the resolution of the hyperbolized heat
equation with $\rho \left( x \right) =c \left(x \right) = 1$ and
$\phi=0$, on the domain $x \in [a, b]$, where $a=0$ and $b=10$. The
conductivity, $k\left(x\right)$, is set constant. Following
Cor. \ref{cor:sine}, we impose $\mathscr{M} = 1$, $\mathscr{W} = 0$
and $\mathscr{C} = 2$.  Initial conditions for $u$ and $q$ are applied
following Cor.  \ref{cor:sine}. Fixed Dirichlet boundary conditions
are applied for both heat flux and temperature. The heat flux is set
upstream and the temperature downstream, as
 \begin{equation}
   q_{in} = - 1,\quad u_{out} = \mathscr{C}b.
 \end{equation}
 
The augmented solver reproduces the exact solution independent of the
grid resolution. Both the nonconservative products and the source term
are constant, and the linear approximation of the integral of these
terms is exact, see Prop. \ref{prop:exact-schemes}.  The not
well-balanced scheme is not able to reproduce the analytical reference
solution, neither for the temperature nor the heat flux. See
Fig. \ref{res_hyp_heat1}, which shows the computed solution of both
schemes after $30,000$ time steps, using a CFL number of $0.8$. The
$L_\infty$-norm of the error of the augmented scheme is in the range
of machine accuracy, see Tab.  \ref{tabla:conv1}.

\begin{figure}
\centering \includegraphics[width=0.49\textwidth]{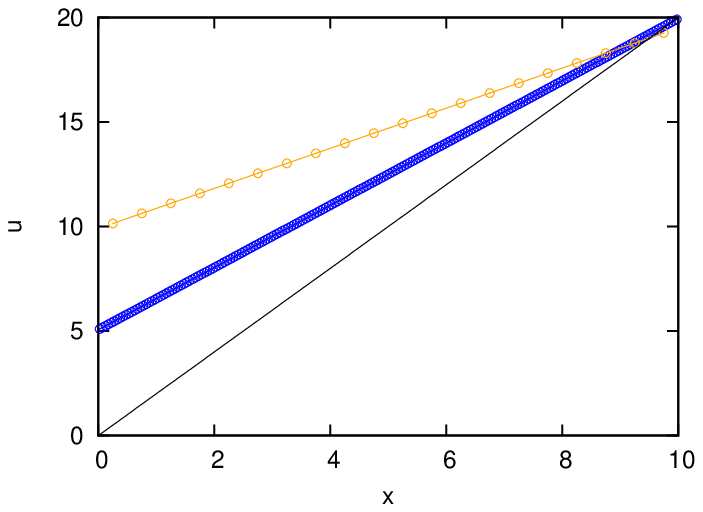}
\includegraphics[width=0.49\textwidth]{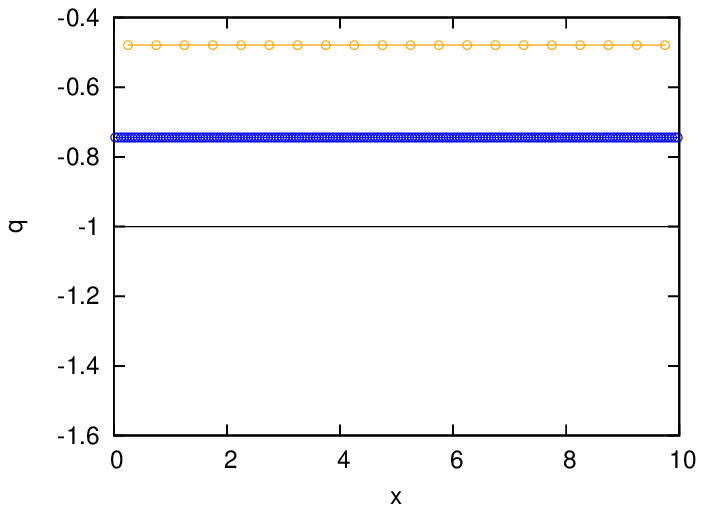}
\includegraphics[width=0.49\textwidth]{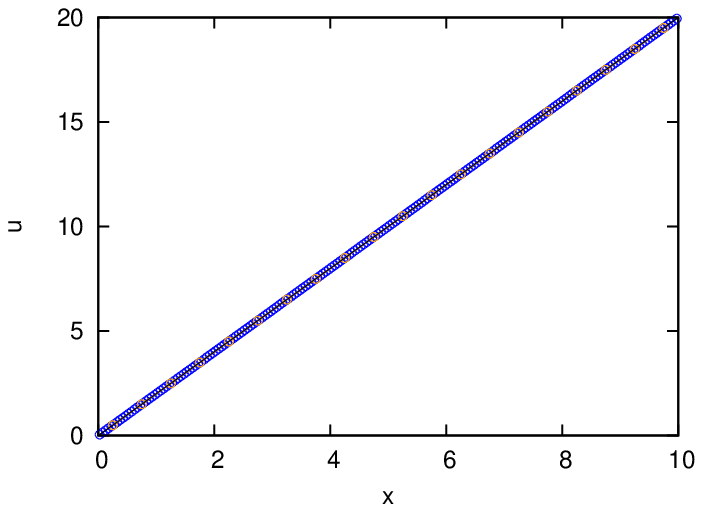}
\includegraphics[width=0.49\textwidth]{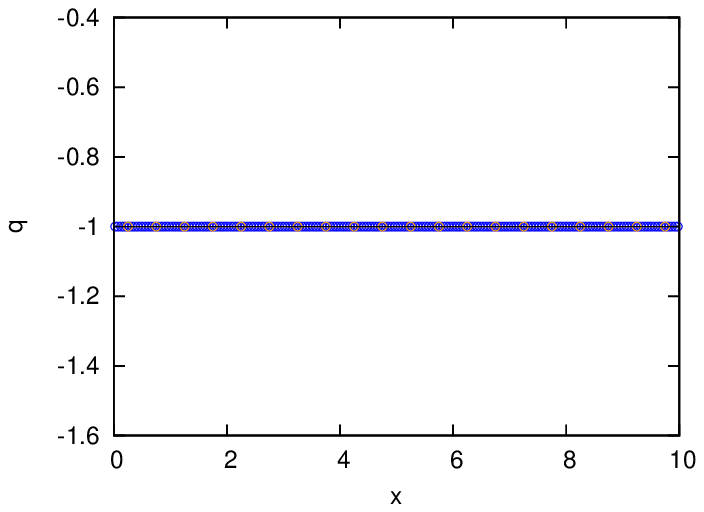}
\caption{Constant conductivity: The not well-balanced scheme (top)
  fails to reproduce the reference solution at the steady state (black
  line), but the augmented scheme (bottom) accurately reproduces
  it. (Numerical solution computed using $\Delta x=0.5$
  ($\textcolor[rgb]{1,0.68,0.36}{-\circ-}$) and $\Delta x=0.05$
  ($\textcolor[rgb]{0,0.0,1}{-\circ-}$)).\label{res_hyp_heat1}}
\end{figure}
 
\begin{table}
\centering
\begin{tabular}{ccccc}
 &  \multicolumn{2}{c}{Unbalanced} & \multicolumn{2}{c}{Augmented}\\
\midrule
{Mesh}   & $L_{\infty}(u)$   & $L_{\infty}(q)$    & $L_{\infty}(u)$   & $L_{\infty}(q)$\\
\midrule
$\Delta x=0.5$   &  9.65e+00 & 5.21e-01   & 2.22e-16  &  3.33e-16\\
$\Delta x=0.05$   &  5.05e+00  & 2.56e-01   & 8.88e-16  & 1.38e-14 \\
\bottomrule
\end{tabular}
\caption{Constant conductivity: $L_{\infty}$ error norms for the numerical solution computed by 
the not well-balanced algorithm and the augmented scheme.}
\label{tabla:conv1}
\end{table}

\subsubsection{Steady solution considering a smoothly varying conductivity }

This case considers again the resolution of the hyperbolized
homogeneous heat equation (i.e., $\phi=0$) with $\rho \left( x \right)
=c \left(x \right) = 1$, but in this case a varying conductivity is
chosen. $k\left(x\right)$ is set as presented in Cor. \ref{cor:sine},
with $\mathscr{M} = 1$, $\mathscr{W} = 1.8$ and $\mathscr{C} = 2$. The
computational domain is given by $x \in [a, b]$, where $a=0$ and
$b=10$. Initial conditions for $u$ and $q$ are applied following
Prop. \ref{cor:sine}. Fixed Dirichlet boundary conditions are applied
for both heat flux and temperature. The heat flux is set upstream and
the temperature downstream, as follows
\begin{equation}
   q_{in} = - 1,\quad u_{out} = - \mathscr{W}\mathrm{cos} \left(b
   \right) + \mathscr{C} b
\end{equation}

Using a CFL number of $0.8$, steady state is reached after $500,000$
time steps.  The augmented solver accurately reproduces the reference
solution for heat flux independent of grid resolution (as anticipated
in Cor. \ref{cor:exact-source}), but is unable to reproduce the
reference solution for the temperature. This is due to the temperature
and conductivity being sinusoidal functions that do not satisfy the
condition in Prop. \ref{prop:exact-schemes}, see the $L_\infty$-norms
for the errors in Tab. \ref{tabla:conv2}. Despite this fact, the
augmented scheme is more accurate than the not well-balanced scheme in
recovering the steady state solution, see Fig. \ref{res_hyp_heat2}.

\begin{figure}
    \centering
    \includegraphics[width=0.49\textwidth]{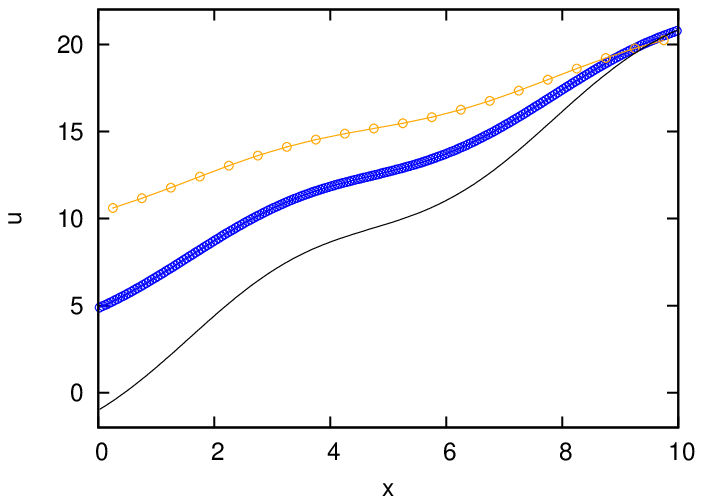}
    \includegraphics[width=0.49\textwidth]{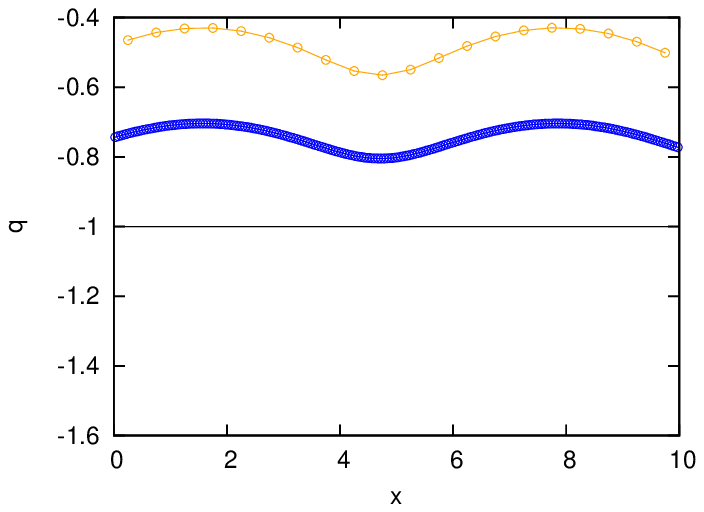}
    \includegraphics[width=0.49\textwidth]{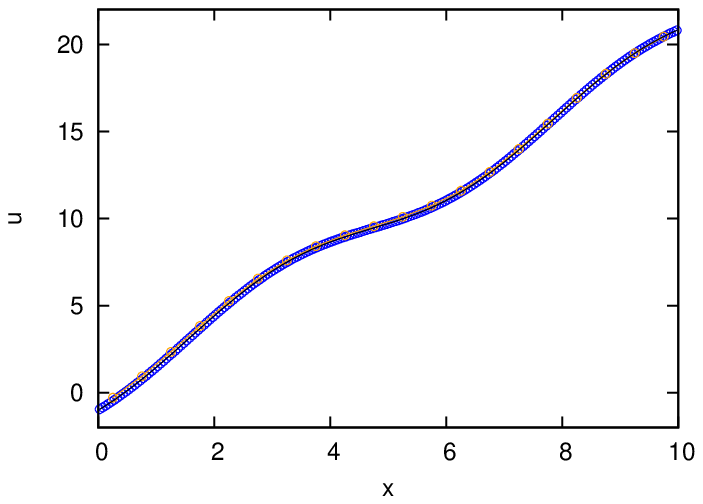}
    \includegraphics[width=0.49\textwidth]{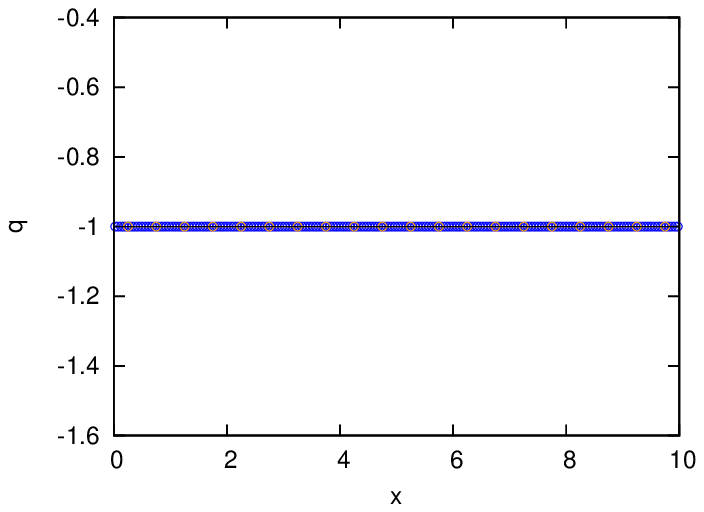}
    \caption{Smoothly varying conductivity: The not well-balanced
      scheme (top) fails to reproduce the reference solution at the
      steady state (black line), the augmented scheme (bottom)
      reproduces it more accurately. (Numerical solution computed
      using $\Delta x=0.5$ ($\textcolor[rgb]{1,0.68,0.36}{-\circ-}$)
      and $\Delta x=0.05$
      ($\textcolor[rgb]{0,0.0,1}{-\circ-}$)).\label{res_hyp_heat2}}
 \end{figure}
 
\begin{table}
\centering
\begin{tabular}{ccccc}
 &  \multicolumn{2}{c}{Unbalanced} & \multicolumn{2}{c}{Augmented}\\
\midrule
{Mesh}   & $L_{\infty}(u)$   & $L_{\infty}(q)$    & $L_{\infty}(u)$   & $L_{\infty}(q)$\\
\midrule
$\Delta x=0.5$   &  1.11e+01 & 5.71e-01   & 1.86e-01   &  5.66e-14\\
$\Delta x=0.05$   & 5.85e+00  & 2.96e-01   & 2.02e-03  & 7.56e-14 \\
\bottomrule
\end{tabular}
\caption{Smoothly varying conductivity: $L_{\infty}$ error norms for
  the numerical solution computed by the not well-balanced algorithm
  and the augmented scheme.}
\label{tabla:conv2}
\end{table}
 
\subsubsection{Steady solution considering a piecewise constant conductivity}

In this case, the hyperbolized heat equation with $\phi=0$, $\rho
\left( x \right) =c \left(x \right) = 1$ is solved considering a
piecewise constant conductivity, given by two constant values of
conductivity separated by a discontinuity as
\begin{equation}\label{mediadef1}
 k \left(x\right) =
\begin{cases}
    k_1 = 1 & \text{if } x < \frac{b+a}{2},\\
    k_2 = 4 & \text{if } x > \frac{b+a}{2}.\\
\end{cases}
\end{equation}
This test case represents the limit situation when making $\delta
\rightarrow 0$ in the exact solution of
Cor. \ref{prop:heat-solution-disc}. The computational domain is given
by $x \in [a, b]$, where $a=0$ and $b=10$. Fixed Dirichlet boundary
conditions are applied for both heat flux and temperature.  The heat
flux is set upstream and the temperature downstream as
\begin{equation}
  q_{in} = - \mathscr{M},\quad u_{out} = -\frac{ \mathscr{M}}{k_2} b
\end{equation}
where $\mathscr{M} = 1$.
 
Steady state is obtained after $30,000$ time steps with a CFL number
of $0.8$. The equilibrium solution provided by the augmented scheme
converges to the solution in Cor. \ref{prop:heat-solution-disc}, where
$\delta = \Delta x$, and converges with first order accuracy to the
exact analytical solution in Cor. \ref{cor:heat-solution-disc-exact}
as the grid resolution is refined, as stated in Cor. \ref{cor:sol},
see the solution plot in Fig.  \ref{res_hyp_heat3} and the detail plot
of the domain around the conductivity jump in
Fig. \ref{res_hyp_heat31}. The $L_{\infty}$ error norm with respect to
the exact solution and the analytical discrete equilibrium solution
shows that the model converges to the equilibrium solution with
machine accuracy, but not to the exact analytical solution, see
Tab. \ref{tabla:conv3}.

 \begin{figure}
	 \centering
 \includegraphics[width=0.49\textwidth]{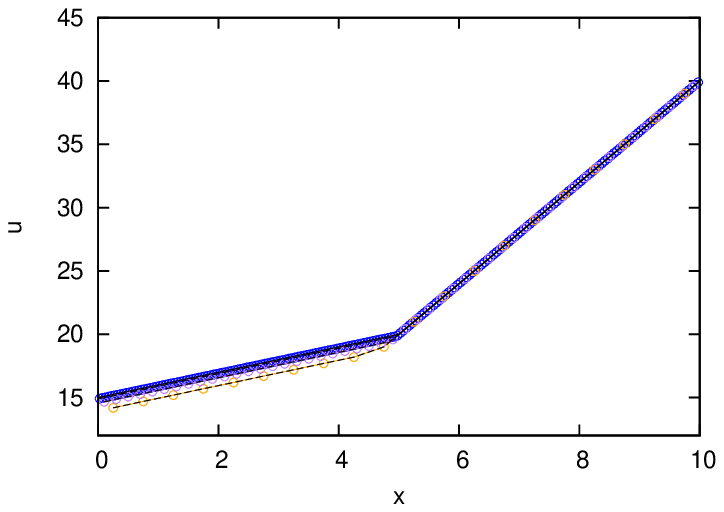}
 \includegraphics[width=0.49\textwidth]{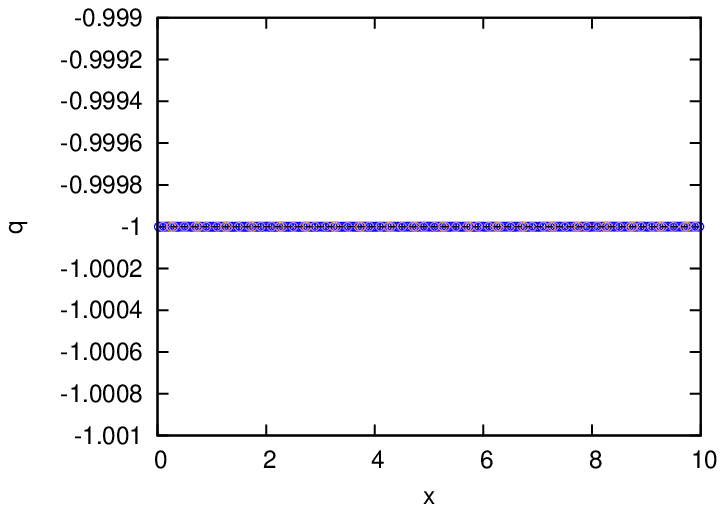}
	 \caption{Piecewise constant conductivity: The augmented
           scheme ($\Delta x=0.5$
           ($\textcolor[rgb]{1,0.68,0.36}{-\circ-}$), $\Delta x=0.2$
           ($\textcolor[rgb]{0.75,0,1}{-\circ-}$), $\Delta x=0.05$
           ($\textcolor[rgb]{0,0.0,1}{-\circ-}$)) converges to the
           equilibrium solution (dashed line) with machine accuracy,
           but fails to reproduce the analytical solution (solid black
           line).\label{res_hyp_heat3}}
 \end{figure}
 
\begin{figure}
	 \centering
 \includegraphics[width=0.75\textwidth]{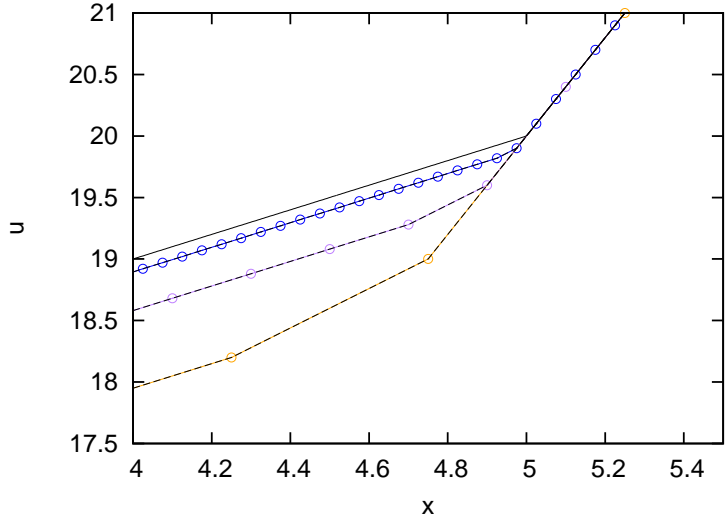}
	 \caption{Piecewise constant conductivity: Detail of the
           solution for temperature, computed by the augmented scheme
           ($\Delta x=0.5$ ($\textcolor[rgb]{1,0.68,0.36}{-\circ-}$),
           $\Delta x=0.2$ ($\textcolor[rgb]{0.75,0,1}{-\circ-}$),
           $\Delta x=0.05$ ($\textcolor[rgb]{0,0.0,1}{-\circ-}$))
           shows the deviation between model results and the exact
           analytical solution (black solid line). The equilibrium
           solution (dashed line) is always captured with machine
           accuracy. \label{res_hyp_heat31}}
\end{figure}
 
\begin{table}
\centering
\begin{tabular}{ccccc}
 & \multicolumn{2}{c}{Exact ($\delta =0$)} &
  \multicolumn{2}{c}{Equilibrium ($\delta =\Delta x$)}\\ \midrule
              {Mesh} & $L_{\infty}(u)$ & $L_{\infty}(q)$ &
              $L_{\infty}(u)$ & $L_{\infty}(q)$\\ \midrule $\Delta
              x=0.5 $ & 1.05e+00 & 4.44e-16 & 1.00e-16 &
              4.44e-16\\ $\Delta x=0.2 $ & 4.20e-01 & 5.11e-15 &
              1.78e-15 & 5.11e-15 \\ $\Delta x=0.05$ & 1.05e-01 &
              1.58e-14 & 1.78e-15 & 1.58e-14 \\ \bottomrule
\end{tabular}
\caption{Piecewise constant conductivity: $L_{\infty}$ error norms of
  the numerical solution with respect to the exact solution and the
  equilibrium solutions computed by the augmented scheme.}
\label{tabla:conv3}
\end{table}

\subsubsection{Steady solution considering an external heat source}

The hyperbolized heat equation with a constant heat source $\phi=0.5$
is considered inside the computational domain $x \in [a, b]$, where
$a=0$ and $b=10$. The product of density and specific heat capacity is
set to $\rho \left( x \right) c \left(x \right) = 0.5$. The
conductivity is set constant, $k\left(x\right)=3$, yielding to the
solution provided in Prop. \ref{prop:heat-solution-source}, where the
discharge at the inlet is set as $q(a)=-1$. Initial conditions for $u$
and $q$ are applied following such solution in
Prop. \ref{prop:heat-solution-source}. Fixed Dirichlet boundary
conditions are applied for both heat flux and temperature. The heat
flux is set upstream and the temperature downstream, as follows
\begin{equation}
   q_{in} \equiv q(a) =  - 1 ,\quad u_{out} = -\frac{\phi}{2k}b^2 - q(a)\frac{b}{k}
\end{equation}

Steady state is obtained after $500,000$ time steps, using a CFL number of $0.9$.
As anticipated in Cor. \ref{cor:exact-source}, the augmented scheme provides the 
exact solution independent of the grid, ensuring the equality in Eq. (\ref{eq:cond_discrete}),
see Fig. \ref{res_hyp_heat4}, and Tab. \ref{tabla:conv2}.

\begin{figure}
\centering
\includegraphics[width=0.49\textwidth]{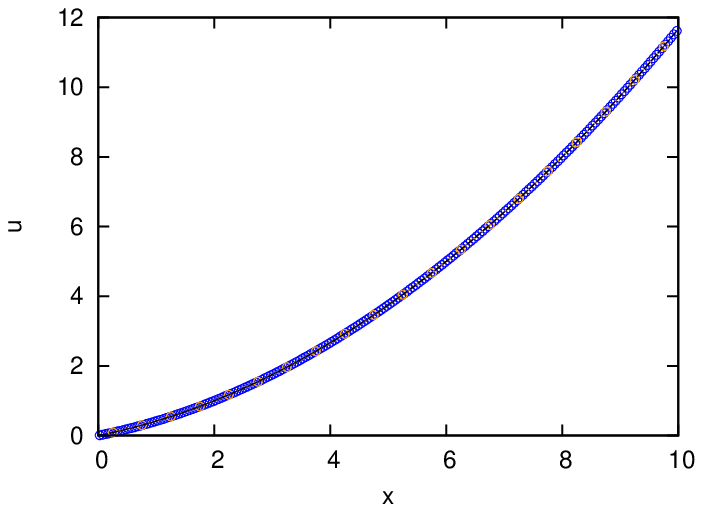}
\includegraphics[width=0.49\textwidth]{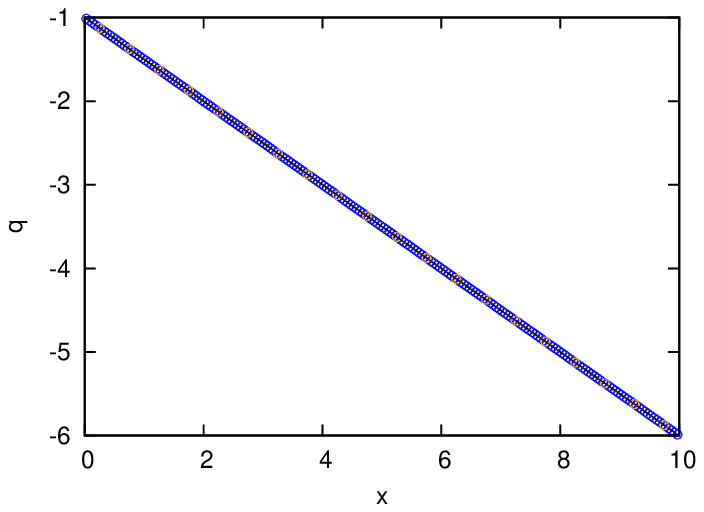}
\caption{External heat source: Augmented scheme ($\Delta x=0.5$
  ($\textcolor[rgb]{1,0.68,0.36}{-\circ-}$) and $\Delta x=0.05$
  ($\textcolor[rgb]{0,0.0,1}{-\circ-}$)) converges to the exact
  solution (solid line) with machine accuracy.\label{res_hyp_heat4}}
\end{figure}
 
\begin{table}
\centering
\begin{tabular}{ccc}
\midrule
{Mesh}      & $L_{\infty}(u)$   & $L_{\infty}(q)$\\
\midrule
$\Delta x=0.5$   & 1.00e-16   &  4.44e-16 \\
$\Delta x=0.05$  & 3.55e-15  &   1.33e-14 \\
\bottomrule
\end{tabular}
\caption{External heat source: $L_{\infty}$ error norms for the numerical 
solution computed by the augmented scheme.}
\label{tabla:conv4}
\end{table}

\subsubsection{Transient solution for a Riemann problem  with constant conductivity} 

This test considers the resolution of a RP for the hyperbolic heat
equation in Eq. (\ref{eq:hyp-heat}) with the IC given as
\begin{equation}\label{eq:rp1_ini}
\left\{
\begin{aligned}
   &\accentset{\circ}{u}\left(x\right) = 
   \begin{cases}
      	-1 & \text{if }  x < 5, \\
		1  & \text{if }  x > 5, \\
    \end{cases} \\ 
	& \accentset{\circ}{q}\left(x\right) = 0,
\end{aligned}
\right.
\end{equation}
and considering a constant conductivity $k(x)=0.05$. No source term is
considered, $\phi=0$, and $\rho \left( x \right) =c \left(x \right) =
1$. The computational domain is set as $\Omega=[0,10]$ and the
solution is computed at $t=2$ and $t=5$ using $\Delta x=0.5$ and
$\Delta x = 0.1$ by means of the augmented scheme. The CFL number is
set to 0.5 and $\varepsilon$ is set according to
Prop. \ref{prop:varepsilon}.

For this problem configuration it is possible to find an analytical
condition, which reads:
\begin{equation}\label{eq:analytical_rp1}
u(x,t)= \text{erf} \left( \frac{x-5}{2\sqrt{k t}} \right) \,, \quad
q(x,t)=-k \frac{e^{-(x-5)^2/(4 k t))}}{\sqrt{k \pi t}};
\end{equation}

The numerical results evidence that the augmented scheme accurately
captures the transient evolution of the temperature and heat flux and
converges to the exact solution when the mesh is refined. The
numerical solution is depicted in Fig. \ref{res_hyp_heat_tr1}.

\begin{figure}
\centering
\includegraphics[width=0.49\textwidth]{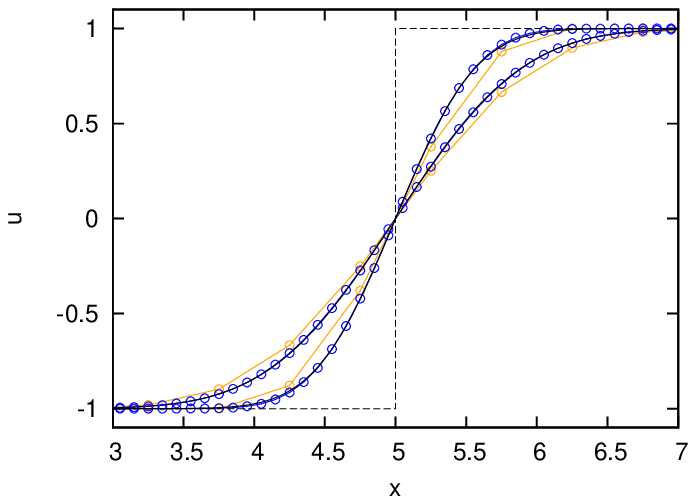}
\includegraphics[width=0.49\textwidth]{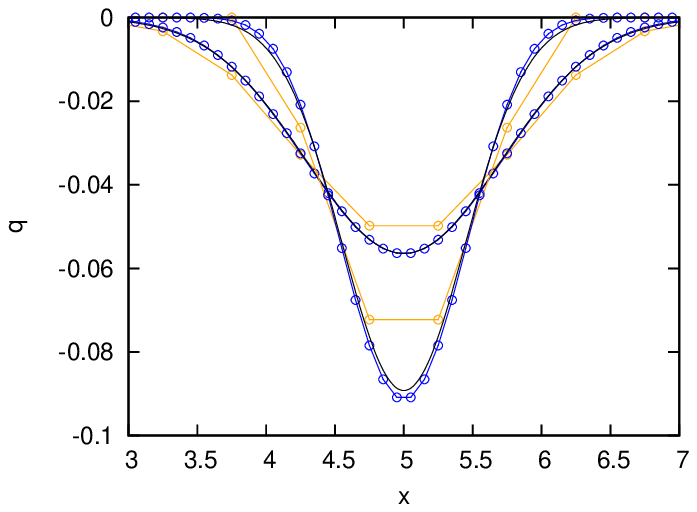}
\caption{RP with constant conductivity: Numerical solution provided by
  the augmented scheme ($\Delta x=0.5$
  ($\textcolor[rgb]{1,0.68,0.36}{-\circ-}$) and $\Delta x=0.1$
  ($\textcolor[rgb]{0,0.0,1}{-\circ-}$)) accurately captures the exact
  solution (solid line) in the transient case. Dashed line shows the
  IC. \label{res_hyp_heat_tr1}}
\end{figure}

\subsubsection{Transient solution for a Riemann problem with piecewise 
constant conductivity} 

This test considers the resolution of a RP for the heat equation in
Eq. (\ref{eq:par-heat}) with the initial condition in
Eq. (\ref{eq:rp1_ini}).  No source term is considered (i.e.,
$\phi=0$). Medium properties are set to $\rho \left( x \right) =c
\left(x \right) = 1$. Two different piecewise constant distributions
of conductivity are considered. The first case, referred to as RP-a,
uses
\begin{equation}\label{eq:rp2_a}
  k(x) = 
  \begin{cases}
    0.1  & \text{if } x < 5, \\
    0.01 & \text{if } x > 5, 
  \end{cases}
\end{equation}
whereas the second case, referred to as RP-b, uses
\begin{equation}\label{eq:rp2_b}
  k(x) =
  \begin{cases}
    0.1  & \text{if }  x < 5, \\
    0.05 & \text{if }  x > 5.
  \end{cases}
\end{equation}
 
The computational domain is set as $\Omega=[0,10]$ and the solution is
computed at $t=8$ using $\Delta x=0.5$ and $\Delta x = 0.1$ by means
of the augmented scheme. The CFL number is set to 0.5 and
$\varepsilon$ is set following Prop. \ref{prop:varepsilon}.

A numerical solution computed with the augmented scheme on a grid of
$\Delta x=0.001$ is used as reference solution for this case.  The
numerical results evidence that the augmented scheme converges to the
reference solution when the mesh is refined. The proposed scheme is
able to handle strong discontinuities of the conductivity coefficient
without generating spurious oscillations. The numerical solution for
these test cases is depicted in Fig. \ref{res_hyp_heat_tr2a} and
Fig. \ref{res_hyp_heat_tr2b}.

\begin{figure}
\centering
\includegraphics[width=0.49\textwidth]{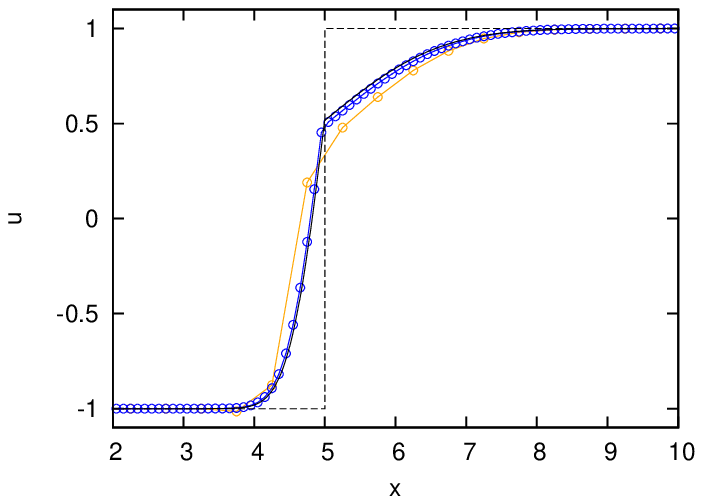}
\includegraphics[width=0.49\textwidth]{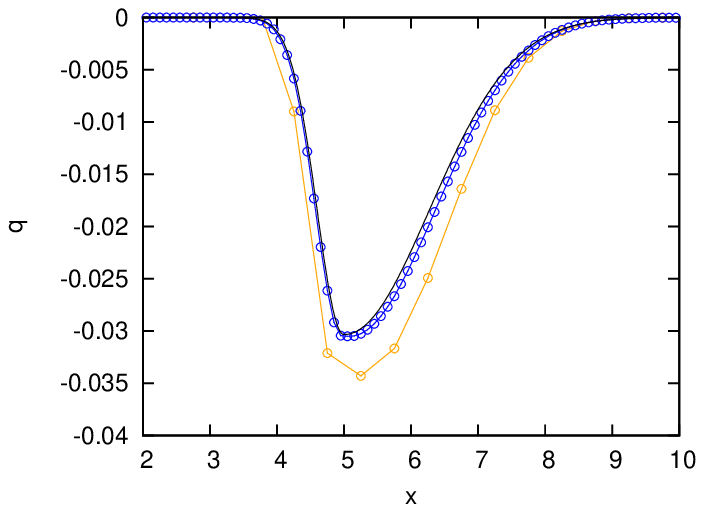}
\caption{RP-a with conductivity jump: Numerical solution provided by
  the augmented scheme ($\Delta x=0.5$
  ($\textcolor[rgb]{1,0.68,0.36}{-\circ-}$) and $\Delta x=0.1$
  ($\textcolor[rgb]{0,0.0,1}{-\circ-}$)) converges to the reference
  solution (solid line). Dashed line shows the
  IC.\label{res_hyp_heat_tr2a}}
\end{figure}

\begin{figure}
\centering
\includegraphics[width=0.49\textwidth]{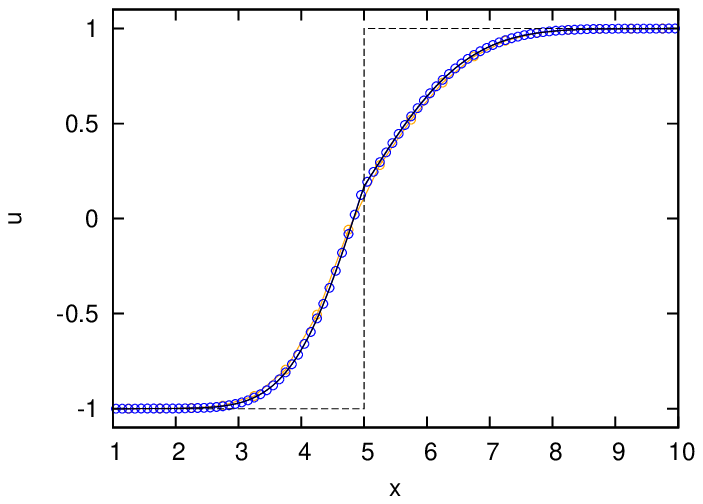}
\includegraphics[width=0.49\textwidth]{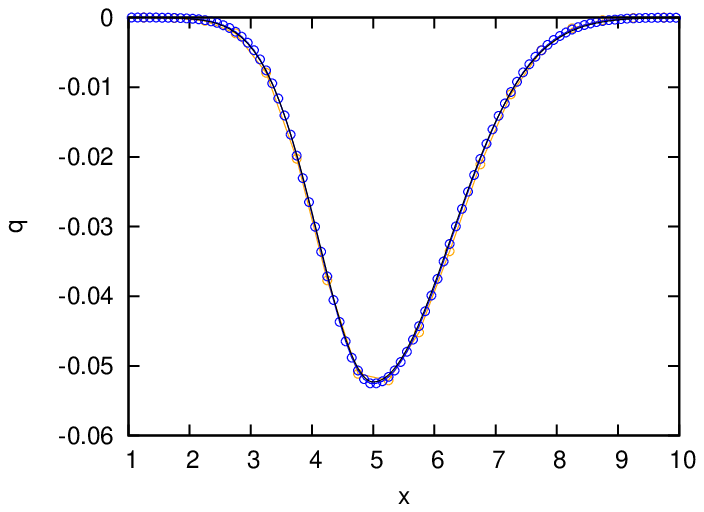}
\caption{RP-b with conductivity jump: Numerical solution provided by 
the augmented scheme ($\Delta x=0.5$ ($\textcolor[rgb]{1,0.68,0.36}{-\circ-}$) and $\Delta x=0.1$ 
($\textcolor[rgb]{0,0.0,1}{-\circ-}$)) converges to the reference
solution (solid line). Dashed line shows the IC.\label{res_hyp_heat_tr2b}}
\end{figure}

\section{Conclusions\label{sec:concl}}

This paper proposes a general framework to well-balance linear
noncon\-ser\-va\-tive hyperbolic systems with source terms, focusing
on the equilibrium properties of the numerical schemes.  Our schemes
are based on augmented Riemann solvers, which are revisited here using
a common framework that uses a geometric reinterpretation of source
terms. This geometric reinterpretation allows to handle arbitrary
sources in the same manner (i.e., they are introduced in the
definition of the Riemann problem as a singular source term).

It is proved that the schemes constructed within the proposed
framework are intrinsically well-balanced under linearity
conditions. In the problems where the source terms are linear and the
solution yields linear nonconservative products, the schemes will
reproduce the exact solution with machine precision. Otherwise, they
will converge to the exact solution with the order of accuracy chosen
for the discretization (e.g., first order in this work). It must be
noted that the schemes could be extended to higher order of accuracy
using traditional TVD, ENO or WENO reconstructions in space
\cite{harten_1987,liu_1994} and Runge-Kutta or ADER time-stepping
\cite{Shu1998, Schwartzkopff_2002, navasmontilla_2016, dumbser2013}.

In the presented test cases, we show that the proposed methods
accurately capture the transient evolution of the waves, though they
involve a higher dispersion error than the traditional wave
propagation algorithm \cite{LeVeque_1997}. The hyperbolized heat
equation allows a very complete exercise of the equilibrium properties
of the proposed methods under a wide variety of conditions (e.g.,
spatial variation of the medium conductivity, presence of a heat
source term with a possible variation in space, and presence of
discontinuities).  The proposed methods have been validated using as
benchmark a complete set of test cases, involving transient and steady
solutions, for which the analytical solutions have been presented. The
numerical results evidence that the proposed scheme is able to provide
a time-accurate resolution of transient cases. For steady-state
problems, the schemes reproduce the exact discharge with machine
accuracy for all cases provided a constant or linear heat source
term. The temperature is generally approximated with first order of
accuracy. In the case when the system in Eq.  (\ref{eq:systemnDim})
satisfies linearity conditions, the scheme also reproduces the exact
temperature with machine accuracy.

The schemes also evidence a good performance (i.e., convergence with
mesh refinement $L_{\infty} = \mathcal{O}(\Delta x)$), when dealing
with discontinuities in the medium properties. The generation of an
artificial transition layer in the numerical solution, in the sense of
the 3-shock representation of discontinuous solutions in nonlinear
systems \cite{navasmontilla_2019} is observed. The size of such
transition layer and the value of the variables in it have been
analytically derived and numerically confirmed for the hyperbolized
heat equation.

The proposed approach is a reliable tool to provide an accurate
numerical solution to heat diffusion problems in heterogeneous media
in presence of different source terms. In presence of heat sources, it
allows to exactly (or accurately, if the source function is nonlinear
and has to be numerically approximated) preserve the balance of heat
fluxes.  These properties make the proposed schemes a useful tool for
realistic engineering problems such as the simulation of phase-change
problems, refer to \cite[among others]{Voller_1987, Shatikian_2005,
  Shatikian_2007}, and may represent an alternative to more
traditional schemes such as the finite difference methods for
instance.

The methods in this work can be easily extended to solve
convection-diffusion problems where the nonconservative products
coexist with the conservative contributions that arise from the
convective term. An exact balancing of those terms can be achieved in
the same way. The extension of the proposed methods to higher spatial
dimensions is straightforward following a dimensional splitting
approach.

\section*{Acknowledgments}
A. Navas-Montilla acknowledges the partial funding by Gobierno de
Arag\'on through the Fondo Social Europeo.  The authors thank
R. J. LeVeque and K. Mandli for the insightful discussions on the
topic.

\bibliography{references}{}
\bibliographystyle{plain}

\appendix
\section{Eigenanalysis of the linear acoustic equation}\label{app:acoustic}

The linear acoustic equation in Eq. \ref{eq:acousticeq1D} can be written in
the form of Eq. \ref{eq:systemnDim} with
\begin{equation}
    U = \left[
    \begin{array}{c}
    p\\
    u
    \end{array} \right], \; \mathcal{A} = \left[ \begin{array}{c c} 
    0 & K \\
    \frac{1}{\rho} & 0
    \end{array} \right], \; S = \left[ \begin{array}{c}
    0\\
    0
    \end{array} \right].
\end{equation}
The eigenvalues of the coefficient matrix $\mathcal{A}$ read
\begin{equation}
    \lambda^1 = - \sqrt{\frac{K}{\rho}}, \; \lambda^2 = \sqrt{\frac{K}{\rho}},
\end{equation}
with corresponding right eigenvalue matrix $\mathcal{P}$, and its inverse as
\begin{equation}
    \mathcal{P} = \left[ \begin{array}{c c}
    - \frac{1}{\sqrt{K/\rho}} & \frac{1}{\sqrt{K/\rho}}\\
    1 & 1
    \end{array} \right], \quad
    \mathcal{P}^{-1} = \frac{1}{2} \left[ \begin{array}{c c}
    -\sqrt{\frac{K}{\rho}} & 1 \\
    \sqrt{\frac{K}{\rho}} & 1
    \end{array} \right].
\end{equation}
The system is strictly hyperbolic with $2$ real and distinct eigenvalues.

\section{Eigenanalysis of the hyperbolic heat equation}\label{app:heat}

\subsection{Original hyperbolic heat equation}

The hyperbolic heat equation in Eq. \ref{eq:hyp-heat} can be written in
the form of Eq. \ref{eq:systemnDim} with
\begin{equation}
    U = \left[
    \begin{array}{c}
    u\\
    q
    \end{array} \right], \; \mathcal{A} = \left[ \begin{array}{c c} 
    0 & r \\
    \frac{k}{\varepsilon} & 0
    \end{array} \right], \; S = \left[ \begin{array}{c}
    r\phi\\
    - \frac{q}{\varepsilon}
    \end{array} \right],
\end{equation}
and $c\left(x\right)  \rho\left(x\right) = 1/r$ for simplicity. The eigenvalues of 
the coefficient matrix $\mathcal{A}$ read
\begin{equation}
    \lambda^1 = - \sqrt{\frac{k r}{\varepsilon}}, \; \lambda^2 = \sqrt{\frac{k r}{\varepsilon}},
\end{equation}
with corresponding right eigenvalue matrix $\mathcal{P}$, and its inverse as
\begin{equation}
    \mathcal{P} = \left[ \begin{array}{c c}
    - \sqrt{\frac{\varepsilon r}{k}} & \sqrt{\frac{\varepsilon r}{k}}\\
    1 & 1
    \end{array} \right], \quad
    \mathcal{P}^{-1} = \frac{1}{2} \left[ \begin{array}{c c}
    -\sqrt{\frac{k}{\varepsilon r}} & 1 \\
    \sqrt{\frac{k}{\varepsilon r}} & 1
    \end{array} \right].
\end{equation}
The system is strictly hyperbolic with $2$ real and distinct eigenvalues.

\subsection{Augmented conservative hyperbolic heat equation}

According to the augmented scheme formulation in flux form in Sec. \ref{sectionflux}, the convective part of the system of equations in Eq. \ref{eq:hyp-heat}  can be rewritten in conservative form (see Eq. (\ref{eq:systemAugmented})), yielding
\begin{equation}\label{eq:hyp-heat-aug}
    \left\{
    \begin{aligned}
   &\frac{\partial u}{\partial t} 
   + r \frac{\partial q}{\partial x} = r\phi,\\
    &\frac{\partial q}{\partial t} + \frac{1}{\varepsilon}  
    \frac{\partial (ku)}{\partial x} = \frac{u}{\varepsilon}  
    \frac{\partial k}{\partial x}  - \frac{q}{\varepsilon}.
    \end{aligned}
    \right.
\end{equation}
This system can be written in matrix form as in Eq. \ref{eq:systemAugmented} with
\begin{equation}
    \bar{U} = \left[
    \begin{array}{c}
    u\\
    q\\
    k
    \end{array} \right], \;  \bar{F} = \left[ \begin{array}{c} 
    q \\
    \frac{k u }{\varepsilon} \\
    0
    \end{array} \right], \; \bar{\mathcal{K}} = \left[ \begin{array}{c c c} 
    0 & 0 & 0 \\
    0 & 0 & \frac{ u }{\varepsilon} \\
    0 & 0 & 0
    \end{array} \right], \; S = \left[ \begin{array}{c}
   r \phi\\
    - \frac{q}{\varepsilon}\\
    0
    \end{array} \right].
\end{equation}

 For the system in Eq. (\ref{eq:systemAugmented}), it is possible to define a Jacobian matrix of the flux $\bar{F}$ (see Eq. (\ref{eq:aug6}))
 \begin{equation}
  \bar{\mathcal{M}} = \left[ \begin{array}{c c c}
    0& r & 0\\
     \frac{k}{\varepsilon} &0 &  \frac{u}{\varepsilon} \\
     0 & 0 & 0
    \end{array} \right], 
\end{equation}

The eigenvalues of  the coefficient matrix $\bar{\mathcal{M}}$ read
\begin{equation}
    \lambda^1 = - \sqrt{\frac{k r}{\varepsilon}}, \quad  \lambda^2=0, \quad \lambda^3 = \sqrt{\frac{k r}{\varepsilon}},
\end{equation}
with corresponding right eigenvalue matrix $\mathcal{P}$, and its inverse as
\begin{equation}
    \bar{\mathcal{P}} = \left[ \begin{array}{c c c}
    - \sqrt{\frac{\varepsilon r}{k}} & {\frac{u}{k}} & \sqrt{\frac{\varepsilon r}{k}}\\
    1 & 0 & 1 \\
    0 & 1 & 0 \\
    \end{array} \right], \quad
    \bar{\mathcal{P}}^{-1} = \frac{1}{2} \left[ \begin{array}{c c c}
    -\sqrt{\frac{k}{\varepsilon r}} & 1 & -\frac{u}{ \sqrt{\varepsilon k r}} \\
    0 & 0 & 2\\
    \sqrt{\frac{k}{\varepsilon r}} & 1 &   \frac{u}{ \sqrt{\varepsilon k r}} 
    \end{array} \right].
\end{equation}
The system is strictly hyperbolic with $3$ real and distinct eigenvalues.

\end{document}